\numberwithin{equation}{section}
\newcommand\vv{\vec v}
\newcommand\nix{\,\cdot\,}
\newcommand\atom{\delta}
\newcommand\thet{\vartheta}
\newcommand\dd{{\mathrm d}}
\newcommand\G{\vec G}
\newcommand\T{\vec T}
\newcommand\br[1]{\left(#1\right)}
\def\vec#1{\mathbf{#1}}
\DeclareMathOperator{\pr}{\mathbb P}
\newtheorem{definition}{Definition}[section]
\newtheorem{theorem}[definition]{Theorem}
\newtheorem{lemma}[definition]{Lemma}
\newtheorem{proposition}[definition]{Proposition}
\newcommand\core{\cC}
\newcommand\cB{\mathcal{B}}
\newcommand\cC{\mathcal{C}}
\newcommand\cF{\mathcal{F}}
\newcommand\cG{\mathcal{G}}
\newcommand\cE{\mathcal{E}}
\newcommand\cT{\mathcal{T}}
\newcommand\cI{\mathcal{I}}
\newcommand\cJ{\mathcal{J}}
\newcommand\cL{\mathcal{L}}
\newcommand\cP{\mathcal{P}}
\newcommand\cX{\mathcal{X}}
\def\cC{{\mathcal C}}
\def\cE{{\mathcal E}}
\newcommand\eps{\varepsilon}
\newcommand\Erw{\mathbb{E}}
\newcommand{\vecone}{\vec{1}}
\newcommand{\Po}{{\rm Po}}
\newcommand{\Bin}{{\rm Bin}}
\newcommand{\Be}{{\rm Be}}
\newcommand\bc[1]{\left({#1}\right)}
\newcommand\cbc[1]{\left\{{#1}\right\}}
\newcommand\brk[1]{\left\lbrack{#1}\right\rbrack}
\newcommand\abs[1]{\left|{#1}\right|}
\newcommand\RR{\mathbb{R}}
\newcommand{\whp}{w.h.p.}
\newcommand{\stacksign}[2]{{\stackrel{\mbox{\scriptsize #1}}{#2}}}
\newcommand{\Erdos}{Erd\H{o}s}
\newcommand{\Renyi}{R\'enyi}
\newcommand{\Bollobas}{Bollob\'as}
\newcommand{\Luczak}{\L uczak}
\newcommand\Lem{Lemma}
\newcommand\Prop{Proposition}
\newcommand\Thm{Theorem}
\newcommand\Sec{Section}
\newcommand\Chap{Chapter}
\newcommand\RSA{Random Structures and Algorithms}
\newcommand\JCTB{Journal of Combinatorial Theory, Series~B}
\newcommand{\toboss}{\uparrow}
\newcommand{\fromboss}{\downarrow}
\newcommand{\labelset}{\{000,001,010,110,111\}}
\newcommand{\Ldn}{\Lambda_{d,n}}
\begin{document}

\title{How does the core sit inside the mantle?}

\author{Amin Coja-Oghlan$^*$, Oliver Cooley$^{**}$, Mihyun Kang$^{**}$ and Kathrin Skubch}
\thanks{$^*$The research leading to these results has received funding from the European Research Council under the European Union's Seventh 
Framework Programme (FP/2007-2013) / ERC Grant Agreement n.\ 278857--PTCC\\
$^{**}$Supported by Austrian Science Fund (FWF): P26826 and W1230, Doctoral Program ``Discrete Mathematics''.\\
An extended abstract for this work has been submitted to EUROCOMB2015.}
\date{\today}

\address{Amin Coja-Oghlan, {\tt acoghlan@math.uni-frankfurt.de}, Goethe University, Mathematics Institute, 10 Robert Mayer St, Frankfurt 60325, Germany.}

\address{Oliver Cooley, {\tt cooley@math.tugraz.at}, Technische Universit\"at Graz, Institute of Optimization and Discrete Mathematics (Math B), Steyrergasse 30, 8010 Graz, Austria}

\address{Mihyun Kang, {\tt kang@math.tugraz.at}, Technische Universit\"at Graz, Institute of Optimization and Discrete Mathematics (Math B), Steyrergasse 30, 8010 Graz, Austria}

\address{Kathrin Skubch, {\tt skubch@math.uni-frankfurt.de}, Goethe University, Mathematics Institute, 10 Robert Mayer St, Frankfurt 60325, Germany.}

\maketitle

\begin{abstract}
\noindent
The $k$-core, defined as the largest subgraph of minimum degree $k$,  of the random graph $\G(n,p)$ has been studied extensively.
In a landmark paper Pittel, Wormald and Spencer [\JCTB\ {\bf 67} (1996) 111--151] determined the threshold $d_k$ for the appearance of an extensive $k$-core.
Here we derive a multi-type branching process that describes precisely how the $k$-core is ``embedded'' into the random graph for any $k\geq3$ and any fixed average degree $d=np>d_k$.
This generalises prior results on, e.g., the internal structure of the $k$-core.

\bigskip
\noindent
\emph{Mathematics Subject Classification:} 05C80 (primary), 05C15 (secondary)
\end{abstract}

\section{Introduction}\label{Sec_intro}

\noindent{\em Let $\G=\G(n,d/n)$ denote the random graph on the vertex set $[n]=\cbc{1,\ldots,n}$
	in which any two vertices are connected with probability $p=\frac dn$ independently.
	Throughout the paper we let $d>0$ be a number that remains fixed as $n\to\infty$.
	The random graph $\G$ enjoys a property with high probability (`\whp') if its probability tends to $1$ as $n\to\infty$.}

\subsection{Background and motivation}\label{Sec_Back}
The``giant component'' has remained a guiding theme in the theory of random graphs
ever since the seminal paper of \Erdos\ and \Renyi~\cite{ER}.
By now, there exists an impressive body of work on its birth, size, avoidance, central and local limits as well as its large deviations  (among other things),
derived via combinatorial, probabilistic and analytic methods~\cite{BB,JLR}.
A key observation in this line of work is
that the emergence of the giant component is analogous to the survival of a Galton-Watson branching process~\cite{Karp}.
This is important not only because this observation leads to a beautiful proof of the original result of \Erdos\ and \Renyi,
but also because the branching process analogy crystallises the interplay of the local and the global structure of the random graph.
Indeed, the notion that the Galton-Watson tree is the limiting object of the ``local structure'' of the random graph can be formalised
neatly in the language of ``local weak convergence''~\cite{Aldous,BenjaminiSchramm,BordenaveCaputo}.

Because for any $k\geq3$ the $k$-core, defined as the (unique) maximal subgraph of minimum degree $k$,
is identical to the largest $k$-connected subgraph of the random graph \whp~\cite{Tomasz1,Tomasz2},
the $k$-core is perhaps the most natural generalisation of the ``giant component''.
As a consequence, the $k$-core problem has attracted a great deal of attention.
Pittel, Wormald and Spencer~\cite{Pittel} were the first to determine the precise threshold $d_k$ beyond which the $k$-core is non-empty \whp\ 
Indeed, they obtained a formula for its asymptotic size.
Specifically, denote by $\cC_k(G)$ the $k$-core of a graph $G$.
Then for any $k\geq3$ there is a function $\psi_k:(0,\infty)\to[0,1]$ such that for any $d\in(0,\infty)\setminus\cbc{d_k}$ the sequence
$(n^{-1}|\cC_k(\G)|)_n$ converges to $\psi_k(d)$ in probability.
The function $\psi_k$ is identical to $0$ for $d<d_k$, continuous, strictly increasing and strictly positive for $d>d_k$ but, remarkably, discontinuous at the point $d_k$:
	 the moment the $k$-core emerges, it is of linear size~\cite{Tomasz1,Tomasz2}.
The proof in~\cite{Pittel} is based on a careful study of a ``peeling process'' that repeatedly removes vertices of degree less than $k$ from the random graph.
However, Pittel, Wormald and Spencer pointed out that a simple ``branching process'' heuristic predicts the correct threshold
and the correct size of the $k$-core, and this argument has subsequently been turned into an alternative proof of their result~\cite{MolloyCores,Riordan}.

The aim of the present paper is to enhance this branching process perspective of the $k$-core problem
to characterise how the $k$-core ``embeds'' into the random graph.
More specifically, we are concerned with the following question.
Fix $k\geq3$, $d>d_k$ and let $s>0$ be an integer.
Generate a random graph $\G$ and colour each vertex that belongs to the $k$-core black and all other vertices white.
Now, pick a vertex $\vec v$ uniformly at random.
What is the distribution of the {\em coloured} subgraph
induced on the set of all vertices at distance at most $s$ from $\vec v$?
Of course, {\em without} the colours the standard branching process analogy yields convergence to the
``usual'' Galton-Watson tree with $\Po(d)$ offspring.
The point of the present paper is to exhibit a multi-type branching process that yields the limiting distribution of the {\em coloured} subgraph.
In particular, this process describes exactly how we walk into and out of the $k$-core while exploring the random graph from $\vec v$.

This is challenging because
the distribution of the interconnections between the $k$-core and the ``mantle'' (i.e., the vertices outside the core) is intricate.
For instance, suppose that for each vertex $v$ we are given the number  $d^*(v)$  of neighbours that $v$ has inside the core of $\G$
and the number $d_*(v)$  of neighbours that $v$ has in the mantle.
Then the core is simply equal to the set $S$ of all vertices $v$ such that $d^*(v)\geq k$.
But if, conversely,  we sample a graph $\G'$ randomly subject to the condition that every vertex $v$ has $d^*(v)$ neighbours in $S$
and $d_*(v)$ neighbours outside of $S$, then \whp\ the core of $\G'$ will {\em not} be identical to $S$.
In fact, the $k$-core of $\G'$ is a proper superset of $S$ \whp\
One reason for this is that \whp\ there will be $\Omega(n)$ vertices $v\not\in S$ such that $d^*(v)=k-1$ and $d_*(v) \ge 1$.
Consequently, \whp\ there will be two such vertices $v,v'$ that are adjacent in $\G'$ and that therefore belong to its $k$-core.

\subsection{Results}\label{Sec_results}
Recall that a (possibly infinite) graph is {\em locally finite} if all vertices have finite degree.
By a \emph{rooted graph} we mean a connected locally finite graph $G$ on a countable vertex set together with a distinguished vertex $v_0\in V(G)$, the {\em root}.
If $\cX$ is a finite set, then 
a \emph{rooted $\cX$-marked graph} is a rooted graph $G$ together with a map $\sigma:V(G)\to\cX$.
Two rooted $\cX$-marked graphs $(G,v_0,\sigma)$, $(G',v_0',\sigma')$ are {\em isomorphic}
if there is an isomorphism $\pi:G\to G'$ of the graphs $G,G'$ such that $\pi(v_0)=v_0'$ and $\sigma=\sigma'\circ\pi$.
Let $[G,v_0,\sigma]$ denote the isomorphism class of $(G,v_0,\sigma)$ and let
$\cG_\cX$ be the set of all isomorphism classes of rooted $\cX$-marked graphs.
Further, for $s\geq0$ let $\partial^s[G,v_0,\sigma]$ signify the isomorphism class
of the (finite) rooted $\cX$-marked graph obtained 
by deleting all vertices at a distance greater than $s$ from $v$. We sometimes omit the arguments $v_0$ and $\sigma$ when they are clear from the context.

If $G$ is a graph and $k\geq3$ is an integer, then
	$\sigma_{k,G}:V(G)\to\cbc{0,1}$, $v\mapsto\vecone\cbc{v\in\cC_k(G)}$
indicates membership of the $k$-core.
Further, for a vertex $v$ of $G$ we let
$G_v$ denote the component of $v$. 
Then $(G_v,v,\sigma_{k,G_v})$ is a rooted $\cbc{0,1}$-marked graph whose marks indicate membership of the $k$-core of the component $G_v$.
Our aim is to determine the distribution of $\{\partial^s[\G,v,\sigma_{k,\G_{v}}] \, : \, v\in V(G) \}$.

To this end, we construct a multi-type branching process 
that generates a (possibly infinite) rooted $\cbc{0,1}$-marked tree.
As we saw in \Sec~\ref{Sec_intro}, the connections between the mantle and the $k$-core are subject to non-trivial correlations.
Therefore, it might seem remarkable that there even exists a branching process that captures the local structure of $\G$ marked according to $\sigma_{k,\G}$.
The solution is that the branching process actually possesses more than two types.
Indeed, there are five different vertex types, denoted by $(0,0,0)$, $(0,0,1)$, $(0,1,0)$, $(1,1,0)$, $(1,1,1)$. To simplify the notation, we will often write $000$ instead of $(0,0,0)$, and similarly for all other types.
The mark of every vertex will simply be the first ``bit'' of its type.
In other words, the $\cbc{0,1}$-marked random tree that we create is actually the projection of an enhanced tree that
contains the necessary information to accommodate the relevant correlations.

Apart from $d,k$, the $5$-type branching process $\hat\T(d,k,p)$ has a further parameter $p\in[0,1]$. 
Setting
	\begin{align}\label{type_prob}
	q=q(d,k,p)&:=\pr\brk{\Po(dp)=k-1|\Po(dp)\geq k-1},
	\end{align}
we define
	\begin{align*}
	p_{000}&:=1-p,&
	p_{010}&:=pq,&
	p_{110}&:=p(1-q).
	\end{align*}
The process starts with a single vertex $v_0$, whose type is chosen from $\{000,010,110\}$ according to the distribution $(p_{000},p_{010},p_{110})$.
Subsequently, each vertex of type $z_1z_2z_3 \in \labelset$ spawns a random number of vertices of each type.
The offspring distributions are defined by the generating functions $g_{z_1z_2z_2}(\vec x)$ detailed in Figure~\ref{Fig_g}, 
 where
	$\vec x=(x_{000},x_{001},x_{010},x_{110},x_{111})$ and
\[
\bar{q}=\bar{q}(d,k,p) :=\pr\brk{\Po(dp)=k-2|\Po(dp)\leq k-2}.
\]
Thus, for an integer vector $\vec y=(y_{000},y_{001},y_{010},y_{110},y_{111})$ the probability that a vertex of type
$z_1z_2z_3$ generates offspring $\vec y$ equals the coefficient of
	the monomial $x_{000}^{y_{000}}\cdots x_{111}^{y_{111}}$ in $g_{z_1z_2z_3}(\vec x)$.
\begin{figure}\small
	\begin{align*}
	g_{000}(\vec x)	
		&=\exp(d(1-p)x_{000})\frac{\sum_{h=0}^{k-2}(dp)^{h}(qx_{010}+(1-q)x_{110})^{h}/h!}{\sum_{h=0}^{k-2}(dp)^h/h!},\\
	g_{001}(\vec x)
		&=\bar{q}\br{\exp(d(1-p)x_{001})\br{qx_{010}+(1-q)x_{110}}^{k-2}}\\
		&\qquad+(1-\bar{q})\br{\exp(d(1-p)x_{000})\frac{\sum_{h=0}^{k-3}(dp)^{h}(qx_{010}+(1-q)x_{110})^{h}/h!}{\sum_{h=0}^{k-3}(dp)^h/h!}},\\
	g_{010}(\vec x)
		&=\exp(d(1-p)x_{001})\br{qx_{010}+(1-q)x_{110}}^{k-1},\\
	g_{110}(\vec x)
		&=\exp(d(1-p)x_{001})	\frac{\sum_{h\geq k}(dpx_{111})^{h}/h!}{\sum_{h\geq k}(dp)^h/h!},\\
	g_{111}(\vec x)
		&=\exp(d(1-p)x_{001})\frac{\sum_{h\geq k-1}(dpx_{111})^{h}/h!}{\sum_{h\geq k-1}(dp)^h/h!}.
	\end{align*}
\caption{The generating functions $g_{z_1z_2z_3}(\vec x)$.}\label{Fig_g}
\end{figure}

Finally, we turn the resulting $5$-type random tree into a $\cbc{0,1}$-marked tree rooted at $v_0$ by giving mark $0$ to 
all vertices of type $000$, $001$ or $010$, and mark $1$ to vertices of type $110$ or $111$.
Let $\T(d,k,p)$ 
signify  the resulting (possibly infinite) random rooted $\{0,1\}$-marked tree, i.e.\ $\T(d,k,p)$ is a $2$-type projection of the $5$-type process $\hat\T(d,k,p)$.

\begin{theorem}\label{Thm_main}
Assume that $k\geq3$ and $d>d_k$.
Let $s\geq0$ be an integer and let $\tau$ be a rooted $\cbc{0,1}$-marked tree.
Moreover, let $p^*$ be the largest fixed point of
	\begin{equation}\label{eqmain}
	\phi_{d,k}:[0,1]\to[0,1],\qquad p\mapsto\pr\brk{\Po(dp)\geq k-1}.
	\end{equation}
Then 
	$$\frac1n\sum_{v\in V(\G)}
		\vecone\cbc{\partial^s[\G,v,\sigma_{k,\G_{v}}]=\partial^s[\tau]}$$
converges to $\pr\brk{\partial^s[\T(d,k,p^*)]=\partial^s[\tau]}$ in probability.
\end{theorem}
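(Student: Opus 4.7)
I would prove this by identifying the five-type process with the \emph{warning propagation} characterisation of the $k$-core on the limiting Galton-Watson tree. On any locally finite graph $G$, the core $\cC_k(G)$ may be computed by iterating the rule $M_{u\to v}\mapsto\vecone\cbc{|\{w\neq v:(w,u)\in E(G),\;M_{w\to u}=1\}|\geq k-1}$ from the all-ones initialisation; a vertex $v$ then belongs to $\cC_k(G)$ iff at the fixed point it receives at least $k$ incoming $1$-messages. The parameter $p^*$ in \eqref{eqmain} is precisely the asymptotic probability that a uniformly random directed edge of $\G$ carries a $1$-message at this fixed point; the fact that $p^*$ is the \emph{largest} fixed point of $\phi_{d,k}$ reflects the all-ones initialisation, which in turn corresponds to the ``optimistic'' view of the peeling characterisation of the core underlying the branching-process proof of the Pittel-Wormald-Spencer theorem~\cite{MolloyCores,Riordan}.

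\textbf{Types as message patterns.} Each of the five types encodes, for a vertex $v$ visited during the exploration, the triple $(\mathrm{mark}(v),M_{v\to\mathrm{parent}},M_{\mathrm{parent}\to v})$. A direct case analysis shows that precisely the five combinations in $\labelset$ are consistent with the warning-propagation rule: for example, $011$ is impossible because a vertex with $\geq k-1$ $1$-messages from children and an additional $1$-message from its parent has $\geq k$ incoming $1$-messages and must thus lie in the core. With this identification, the generating functions in Figure~\ref{Fig_g} follow by direct calculation on the Galton-Watson tree: the number of $1$-children of $v$ is an appropriately truncated $\Po(dp^*)$, the number of $0$-children is an independent $\Po(d(1-p^*))$, each $1$-child independently receives the core mark with probability $1-q$ (and the mantle mark otherwise), and the down-message that $v$ transmits to each of its children is determined by $v$'s own type and the child's up-message via the warning-propagation rule; this last point explains the two sub-cases of $g_{001}$ weighted by $\bar q$.

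\textbf{From the tree to $\G$.} I would transfer the tree description to $\G$ in three stages. Stage one: for any $\eps>0$ there exists $T=T(\eps)$ such that the output of $T$ rounds of warning-propagation on $\G$ (started from all ones) coincides with $\sigma_{k,\G}$ on all but $\eps n$ vertices \whp; the rate at which $T(\eps)\to\infty$ as $\eps\to 0$ is controlled by the strict contraction $\phi_{d,k}'(p^*)<1$ that holds whenever $d>d_k$. Stage two: by the standard local weak convergence of $\G$ to the Poisson Galton-Watson tree with mean $d$, the depth-$(s+T)$ neighbourhood of a uniformly random vertex $\vec v$ is tree-like \whp, and the joint distribution of its structure together with all $T$-round warning-propagation messages converges to the corresponding object on the limiting tree. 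Combining Stages one and two, restricting to the $\{0,1\}$-valued core marks, and letting $\eps\to 0$ (hence $T\to\infty$) identifies the distributional limit as $\partial^s[\T(d,k,p^*)]$. Stage three: upgrade this distributional convergence at a single random vertex to the in-probability empirical statement via a variance bound; two independent uniformly random vertices have vertex-disjoint depth-$s$ neighbourhoods \whp, so the covariance of the corresponding indicator random variables is $o(1)$ and Chebyshev's inequality concludes.

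\textbf{Main obstacle.} The principal difficulty is Stage one: showing quantitatively that after a sufficient but finite number of warning-propagation rounds on $\G$, the computed marks coincide with $\sigma_{k,\G}$ on all but a vanishing fraction of vertices. The global nature of $\cC_k(\G)$ precludes an exact local computation, and vertices near the ``boundary'' of the core may require many rounds to stabilise. The key ingredients are (i) the isolated-largest-fixed-point property of $p^*$ for $d>d_k$, which gives quantitative contraction of $\phi_{d,k}$ and hence rapid local convergence of warning-propagation, and (ii) a coupling of $\G$ with the configuration model conditional on its split degree sequence $(d^*(v),d_*(v))_v$, which brings the fine combinatorics of the five-type process---and in particular the parameters $q$ and $\bar q$---into play as a tool for controlling the exceptional vertices.
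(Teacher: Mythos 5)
Your proposal follows essentially the same route as the paper: Warning Propagation identifies the core (with monotone convergence from the all-ones initialisation), a bounded number of rounds approximates $\sigma_{k,\G}$ up to $\eps n$ vertices \whp, local weak convergence to the Poisson Galton--Watson tree transfers the computation to the tree, and the five types are exactly the consistent triples $(\mu_v,\mu_{v\toboss},\mu_{\fromboss v})$ whose case analysis yields the generating functions of Figure~\ref{Fig_g}. The only substantive divergence is your suggested coupling with the configuration model conditioned on the split degree sequence, which is unnecessary (the paper handles your ``Stage one'' by combining the containment $\core_k(\G)\subset\{v:\mu_v(t|\G)=1\}$ with the Pittel--Wormald--Spencer size formula, squeezing the symmetric difference) and is in fact the very conditioning the paper warns does not reproduce the correct distribution.
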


\noindent
In words, \Thm~\ref{Thm_main} states that  \whp\ 
the fraction
of vertices $v$ whose depth-$s$ neighbourhood in $\G$ marked according to $\cC_k(\G)$ is isomorphic to $\tau$ 
is asymptotically equal to 
 the probability that the random marked tree $\T(d,k,p^*)$ truncated after $s$ generations is isomorphic to $\tau$.
The proof of \Thm~\ref{Thm_main} will reveal the origin of the generating functions from Figure~\ref{Fig_g}.
They derive from a systematic understanding of the correlations that determine the connections between the mantle and the core.

\Thm~\ref{Thm_main} 
 can be cast elegantly in the framework of local weak convergence;
	the concrete formulation that we use resembles that employed in~\cite{BordenaveCaputo}.
More specifically, we endow the space $\cG_{\cX}$ of isomorphism classes of rooted $\cX$-marked graphs with the coarsest topology that makes all the maps
	\begin{align}\label{eqtop}
	\chi_{\Gamma,s}:\cG_\cX\to\cbc{0,1},&\qquad \Gamma'\mapsto\vecone\cbc{\partial^s\Gamma=\partial^s\Gamma'}&
		\hspace{-1cm}(\Gamma\in\cG_{\cX},s\geq0)
	\end{align}
continuous.	
Let $\cP(\cG_{\cX})$ denote the space of probability measures on $\cG_{\cX}$ equipped with the weak topology.
For $\Gamma\in\cG_{\cX}$ let $\atom_{\Gamma}\in\cP(\cG_{\cX})$ signify the Dirac measure on $\Gamma$.
Further, let
$\cP^2(\cG_{\cX})
	$ be the space of probability measures on $\cP(\cG_{\cX})$, also with the weak topology.
For $\lambda\in\cP(\cG_{\cX})$ let $\atom_\lambda\in\cP^2(\cG_{\cX})$ be
	the Dirac measure on $\lambda$.
Then any graph $G$ gives rise to a measure
	$$\lambda_{k,G}:=\frac1{|V(G)|}\sum_{v\in V(G)}\delta_{[G_v,v,\sigma_{k,G_v}]}\in\cP(\cG_{\cbc{0,1}}),$$
which is simply the empirical distribution of ``marked neighbourhoods'' of the vertices $v\in V(G)$. 
Hence,
	\begin{equation}\label{eqLambdadkn}
	\Lambda_{d,k,n}:=\Erw_{\G}[\delta_{\lambda_{k,\G}}]\in\cP^2(\cG_{\cbc{0,1}})
	\end{equation}
captures the distribution of the ``marked neighbourhoods'' of the random graph $\G$.
Additionally, let $\cL[\T(d,k,p)]\in\cP(\cG_{\{0,1\}})$ denote the distribution of the isomorphism class of the random tree $\T(d,k,p)$.
Finally, let
	$$\thet_{d,k,p}=\cL[\T(d,k,p)]\in\cP(\cG_{\{0,1\}}).$$

\begin{theorem}\label{Thm_lwc}
Assume that $k\geq3$ and $d>d_k$ and let $p^*$ be the largest fixed point of the function  $\phi_{d,k}$ from (\ref{eqmain}).
Then $\lim_{n\to\infty}\Lambda_{d,k,n}=\atom_{\thet_{d,k,p^*}}$.
\end{theorem}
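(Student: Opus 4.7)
The plan is to unwind the definitions and reduce \Thm~\ref{Thm_lwc} to the concrete convergence statement of \Thm~\ref{Thm_main}. Since the limit $\atom_{\thet_{d,k,p^*}}\in\cP^2(\cG_{\cbc{0,1}})$ is a Dirac mass, the weak convergence $\Lambda_{d,k,n}\to\atom_{\thet_{d,k,p^*}}$ is equivalent to the random measure $\lambda_{k,\G}\in\cP(\cG_{\cbc{0,1}})$ converging to the deterministic limit $\thet_{d,k,p^*}$ in probability, where $\cP(\cG_{\cbc{0,1}})$ carries the weak topology. This is the standard fact that convergence in distribution to a deterministic limit in a metrisable space is equivalent to convergence in probability.

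Next I would observe that the countable family $\{\chi_{\Gamma,s}:\Gamma\in\cG_{\cbc{0,1}},\,s\geq 0\}$ of clopen indicators from~\eqref{eqtop} is convergence-determining on $\cP(\cG_{\cbc{0,1}})$. The family separates points (two marked rooted graphs are isomorphic iff their truncations agree at every depth) and is essentially closed under products, since for $s\leq s'$ the function $\chi_{\Gamma,s}\chi_{\Gamma',s'}$ is either identically zero or equal to $\chi_{\Gamma',s'}$. Combined with the uniform tightness of $\{\lambda_{k,\G}\}_n$ in probability, which follows from the elementary fact that the depth-$s$ neighbourhood of a uniformly random vertex in $\G=\G(n,d/n)$ is stochastically dominated by a $\Po(d)$ Galton--Watson tree truncated at depth $s$, a Stone--Weierstrass argument then shows that in order to verify $\lambda_{k,\G}\to\thet_{d,k,p^*}$ weakly in probability it suffices to check that the real-valued integrals $\int\chi_{\tau,s}\,\dd\lambda_{k,\G}$ converge in probability to $\int\chi_{\tau,s}\,\dd\thet_{d,k,p^*}$ for every $\tau\in\cG_{\cbc{0,1}}$ and every $s\geq 0$.

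This last statement is precisely \Thm~\ref{Thm_main}, because
\[
\int\chi_{\tau,s}\,\dd\lambda_{k,\G}=\frac1n\sum_{v\in V(\G)}\vecone\cbc{\partial^s[\G_v,v,\sigma_{k,\G_v}]=\partial^s[\tau]}\quad\text{and}\quad\int\chi_{\tau,s}\,\dd\thet_{d,k,p^*}=\pr\brk{\partial^s[\T(d,k,p^*)]=\partial^s[\tau]}.
\]
Thus the main obstacle in the proof is not probabilistic but purely topological: one has to carefully set up the convergence-determining argument for $\cP(\cG_{\cbc{0,1}})$ and the equivalence between weak convergence in $\cP^2(\cG_{\cbc{0,1}})$ to a Dirac limit and convergence in probability of the underlying random measures. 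Once these standard topological facts are dispensed with, the reduction to \Thm~\ref{Thm_main} is entirely mechanical.
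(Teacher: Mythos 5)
Your topological reductions are fine as far as they go: weak convergence in $\cP^2(\cG_{\cbc{0,1}})$ to the Dirac mass $\atom_{\thet_{d,k,p^*}}$ is indeed equivalent to convergence in probability of the random measures $\lambda_{k,\G}$ to the deterministic limit $\thet_{d,k,p^*}$, and testing against the indicators $\chi_{\tau,s}$ suffices; the paper itself remarks that \Thm~\ref{Thm_main} implies \Thm~\ref{Thm_lwc}. The problem is that your argument is circular relative to the logical structure of the paper: the paper \emph{derives} \Thm~\ref{Thm_main} \emph{from} \Thm~\ref{Thm_lwc} (its proof applies \Thm~\ref{Thm_lwc} to the continuous functional $F:\xi\mapsto\int\abs{\int f\dd\nu-z}\dd\xi(\nu)$). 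By invoking \Thm~\ref{Thm_main} you have deferred the entire probabilistic content of the statement to a result that is nowhere proved independently, so nothing of substance has actually been established.

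The real work, which your proposal does not touch, is twofold. First, one must show that the empirical marked-neighbourhood distribution of $\G$ converges at all and identify the limit with $\theta_{d,k}=\lim_{t\to\infty}\cL([\T_t(d,k)])$, the law of the $\Po(d)$ Galton--Watson tree marked by $t$ rounds of Warning Propagation; this is \Prop~\ref{Prop_main}, and it rests on the fact that boundedly many Warning Propagation iterations approximate the $k$-core up to $\eps n$ vertices \whp\ (\Lem~\ref{Prop_bottomUp}) together with the unmarked local weak convergence of \Thm~\ref{Thm_GW}. Second, one must identify $\theta_{d,k}$ with $\thet_{d,k,p^*}$, the law of the five-type branching process of \Sec~\ref{Sec_results}; this is the content of \Lem s~\ref{Lemma_sizeless}, \ref{Lemma_topdown}, \ref{Lemma_decorated} and~\ref{Lem_id}, which convert the bottom-up message-passing limit into a top-down branching process and verify the generating functions of Figure~\ref{Fig_g} case by case. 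Without an independent argument for either of these steps (or for \Thm~\ref{Thm_main} itself), your proof is incomplete.
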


\noindent
We shall derive  \Thm~\ref{Thm_main} from \Thm~\ref{Thm_lwc}.
Conversely, it is not difficult to see that \Thm~\ref{Thm_main} implies \Thm~\ref{Thm_lwc}.

\subsection{Related work}
The $2$-core exhibits qualitatively different behaviour to the $k$-core for $k\geq3$.
For instance, the $2$-core is non-empty with a non-vanishing probability for any $d>0$.
Moreover, it is of size $\Omega(n)$ for any $d>1$  \whp\ 
Thus, the ``giant $2$-core'' emerges alongside (in fact mostly inside) the giant component.
While we omit a detailed discussion of the literature on the $2$-core (some of the references can be found in~\cite{BB,JLR}),
we remark that several of the arguments developed for the study of the $k$-core for $k\geq3$ encompass the case $k=2$ as well.
>From now on, we will always assume that $k\geq3$.

Since the work of Pittel, Wormald and Spencer~\cite{Pittel}
several different arguments for determining the location of the $k$-core for $k\geq3$ have been suggested.
Some of these approaches have advantages over~\cite{Pittel}, such as being simpler, giving additional information, or applying to a broader class of models
	or combinatorial structures (e.g., hypergraphs rather than just graphs, or random graphs/hypergraphs with given degree sequences).
Roughly, there are two types of proofs.
First, approaches that rely on the analysis of a peeling process akin to the one of Pittel, Wormald and
	Spencer~\cite{Encores,Cooper,Fernholz1,Fernholz2,JansonLuczak,JansonLuczak2,Kim}.
Among these~\cite{Encores,Kim} stand out as they characterise the distribution of the $k$-core and thus
make it amenable to an analysis by standard random graph techniques.
Second, arguments that formalise the ``branching process'' intuition~\cite{MolloyCores,Riordan} put forward in~\cite{Pittel}. In~\cite{Darling} this was achieved in a general non-uniform hypergraph setting via a differential equations method, but conditioned on the degree sequence.

Some results on the local structure of the core and the mantle follow directly from the aforementioned analyses.
For instance, the Poisson cloning model~\cite{Kim} immediately implies that the {\em internal} \ local structure of the $k$-core
can be described by a simple (single-type) Galton-Watson process.
Riordan also pointed out that this local description follows from his analysis~\cite{Riordan}.
Furthermore, Cooper~\cite{Cooper} derived the asymptotic distribution of the internal and the external degree sequences of the vertices in the mantle,
i.e., of the number of vertices with a given number of neighbours in the core and a given number of neighbours outside.
In addition, Sato~\cite{Sato} studied the robustness of the $k$-core (i.e., the impact of deleting random edges).

The contribution of the present work is that we exhibit a branching process that describes
the structure of the core together with the mantle and hence, crucially, the connections between the two.
This is reflected in the fact that \Thm s~\ref{Thm_main} and~\ref{Thm_lwc} deal with $\cbc{0,1}$-marked graphs and trees,
with marks indicating membership of the $k$-core.
Neither the construction of the core via the ``peeling process'' nor the branching process analogy from~\cite{MolloyCores,Pittel,Riordan}
reveal how the core interconnects with the mantle.
In fact, even though~\cite{Cooper} asymptotically determines the degree distribution of the core along with the combined degrees of the vertices in the mantle,
we saw in \Sec~\ref{Sec_Back} that the conditional random graph is {\em not} uniformly random subject to these.

Structures that resemble cores of random (hyper)graphs have come to play an important role in the study of random constraint satisfaction problems,
	particularly in the context of the study of the ``solution space''~\cite{Barriers,Molloy}.
This was first noticed in non-rigorous but analytic work based on ideas from statistical physics (see~\cite{MM} and the references therein).
Indeed, in the physics literature it was suggested to characterise the core by means of a ``message passing'' algorithm called
{\em Warning Propagation}~\cite[\Chap~18]{MM}.
A similar idea is actually implicit in Molloy's paper~\cite[proof of \Lem~6]{Molloy}.
The Warning Propagation description of the core will play a key role in the present paper,
as we shall explain in the next subsection.

\subsection{Techniques and outline}
There is a very natural formulation of Warning Propagation to identify the $k$-core of a given graph $G$.
It is based on introducing ``messages'' on the edges of $G$ and marks on the vertices of $G$, both with values in $\{0,1\}$.
These will be updated iteratively in terms of  a ``time'' parameter $t\geq 0$.
At time $t=0$ we start with the configuration in which all messages are equal to $1$, i.e.,
	 \begin{equation}\label{initialisation}
	    \mu_{v\to w}(0|G)=1\qquad\mbox{for all }\cbc{v,w}\in E(G).
	 \end{equation}
Inductively, writing $\partial v$ for the neighbourhood of vertex $v$ and abbreviating $\partial v\setminus w=\partial v\setminus\cbc w$, we let
	\begin{equation}\label{messages}
	  \mu_{v\to w}(t+1|G)=\vecone\cbc{\sum_{u\in\partial v\setminus w}\mu_{u\to v}(t|G)\geq k-1}.
	\end{equation}
The messages are directed, i.e.\ at each time there are {\em two} messages $\mu_{v\to w}(t|G)$, $\mu_{w\to v}(t|G)$ travelling along the edge $\cbc{v,w}$.
Additionally, the mark of $v\in [n]$ at time $t$ is
	\begin{equation}\label{marks}
	  \mu_v(t|G)=\vecone\cbc{\sum_{u\in\partial v}\mu_{u\to v}(t|G)\geq k}.
	\end{equation}
The intuition is that if $\mu_v(t|G)=0$, then by time $t$ Warning Propagation has ruled out that $v$ belongs to the $k$-core of $G$. 
Conversely, we shall see that the messages converge to a fixed point for any $G$, and that the set of vertices marked $1$ in the fixed point coincides with the $k$-core
	(see Lemma~\ref{Lemma_WP} below).

Indeed,  in \Sec~\ref{Sec_WP} we are going to see that in the case of the random graph $\G$,
a {\em bounded} number of iterations 
suffice to obtain an accurate approximation of the $k$-core \whp\
More precisely, for any fixed $\eps>0$ there exists an integer $t>0$ such that $S=\cbc{v \in [n]:\mu_v(t|\G)=1}$ is
a superset of the $k$-core $\core_k(\G)$ such that $|S\setminus \cC_k(\G)|\le \eps n$ \whp\ (see Lemma~\ref{Prop_bottomUp}).
While this is already implicit in Molloy's proof~\cite{MolloyCores}, he does not phrase it in the language of Warning Propagation
and therefore we provide a (simple) self-contained derivation. 
Based on this observation we will argue that the analysis of the Warning Propagation fixed point on $\G$
reduces to the study of Warning Propagation on the (infinite) Galton-Watson tree with $\Po(d)$ offspring, which is the main result of \Sec~\ref{Sec_WP}.

In \Sec~\ref{Sec_Kathrin}
we show that the multi-type branching process from \Sec~\ref{Sec_results} describes the
distribution of the Warning Propagation fixed point on the infinite $\Po(d)$ Galton-Watson tree.
The somewhat delicate proof of this fact requires several steps.
The key one is to turn the problem of tracing how Warning Propagation passes
messages from the ``bottom'' of the Galton-Watson tree up toward the root into a process where messages are passed ``top-down'', i.e.,
in the fashion of a branching process.
But before we come to this, we need to introduce some background and notation.

\section{Preliminaries}

\subsection{Notation}
Throughout the paper all graphs are assumed to be locally finite with a countable vertex set.
For a graph $G$ and a vertex $v\in V(G)$ we denote 
by $\partial^s(G,v)$ the subgraph of $G$ induced on the set of vertices at distance at most $s$ from $v$. 
We abbreviate $\partial (G,v) = \partial^1(G,v)$ and $\partial v = V(\partial (G,v) - v)$ whenever $G$ is clear from the context.

By a {\em rooted graph} we mean a connected locally finite graph $G$ together with a root $v_0$.
A {\em child} of $v\in V(G)$ is a vertex $w\in\partial v$ whose distance from $v_0$ is greater than that of $v$.
For a vertex $v$ of a rooted graph $G$ we denote by $\partial_+v$ the set of all children of $v$.

Two rooted graphs $(G,v_0)$, $(G',v_0')$ are {\em isomorphic} if there is an isomorphism $\pi:G\to G'$ such that $\pi(v_0)=\pi(v_0')$.
Write $[G,v_0]$ for the isomorphism class of $(G,v_0)$ and let $\cG$ be the set of all isomorphism classes of rooted graphs.
Further, let $\partial^s[G,v_0]$ be the isomorphism class of the rooted graph obtained
 from $(G,v_0)$  by removing all vertices at distance greater than $s$ from $v_0$. We sometimes omit the argument $v_0$ when the root is clear from the context.

For a random variable $X:\Omega\to\cE$ with values in a space $\cE$ we let $\cL(X)$ denote the distribution of $X$.
Thus, $\cL(X)$ is a probability measure on the space $\cE$.

For real numbers $y,z>0$ we let $\Po_{\geq z}(y)$ denote the Poisson distribution $\Po(y)$ conditioned on the event that the outcome is at least $z$.
Thus, 
	$$\pr\brk{\Po_{\geq z}(y)=x}=\frac{\vecone\cbc{x\geq z}y^x\exp(-y)}{x!\pr\brk{\Po(y)\geq z}}\qquad\mbox{for any integer $x\geq0$}.$$
The distributions $\Po_{> z}(y)$, $\Po_{\leq z}(y)$, $\Po_{< z}(y)$ are defined analogously.

We will need to define various random objects and distributions in the paper, with subtle differences between them. \textbf{Reference tables are provided in an appendix} to help the reader maintain an overview of all the definitions.

\subsection{Local weak convergence}
In what follows it will be more convenient
to work with random trees instead of isomorphism classes. Therefore from now on we assume that all random isomorphism classes of graphs, such as branching processes, are additionally equipped with almost surely distinct random labels on every vertex 
to obtain a representative of the particular class. This distinction will
be technically necessary in upcoming proofs. However the actual label of a vertex in this sense will not be taken into account in any of them. 
Thus, let $\T(d)$ denote the random tree which is obtained by labelling each vertex in the standard (single-type) Galton-Watson tree 
with offspring distribution $\Po(d)$
with a number in $[0,1]$ independently and uniformly at random. 

It is well-known that the ``local structure'' of the random graph $\G$ converges to $[\T(d)]$.
We formalise this statement via local weak convergence, closely following~\cite{BordenaveCaputo}.
Thus, we endow $\cG$ with the coarsest topology such that for any $\Gamma\in\cG$, $s\geq0$ the map
	$\chi_{\Gamma,s}:\Gamma'\in\cG\mapsto\vecone\cbc{\partial^s\Gamma'=\partial^s\Gamma}$ is continuous.
Let $\cP(\cG)$ be the set of all probability measures on $\cG$ and let $\cP^2(\cG)$ be the set of all probability measures on $\cP(\cG)$.
Both of these spaces carry the weak topology.

Any graph $G$ gives rise to the empirical distribution
$$\lambda_G=\frac1{|V(G)|}\sum_{v\in V(G)}\atom_{[G_v,v]}\in\cP(\cG).$$
Further, let 
	\begin{equation}\label{eqLambdadn}
	\Ldn=\Erw_{\G}[\delta_{\lambda_{\G}}]\in\cP^2(\cG)
	.\end{equation}
The following theorem expresses the well-known fact that $[\T(d)]$ mirrors the local structure of $\G$
in this notation.

\begin{theorem}\label{Thm_GW}
For any $d>0$ we have 
$\lim_{n\to\infty}\Ldn=\atom_{\cL([\T(d)])}$.
\end{theorem}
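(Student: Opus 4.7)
The goal is to show $\lim_{n\to\infty}\Ldn=\atom_{\cL([\T(d)])}$ in $\cP^2(\cG)$. Since the right-hand side is a Dirac mass, by a standard characterisation of weak convergence to a Dirac measure this is equivalent to showing that the random empirical measure $\lambda_\G\in\cP(\cG)$ converges in probability to the deterministic measure $\cL([\T(d)])$. By the definition of the weak topology on $\cP(\cG)$ combined with the fact that $\cG$ is topologised by the functions $\chi_{\Gamma,s}$, it is enough to show that for every $\Gamma\in\cG$ and every $s\geq 0$,
\[
\lambda_\G(\chi_{\Gamma,s})\;=\;\frac1n\sum_{v\in[n]}\vecone\cbc{\partial^s[\G_v,v]=\partial^s\Gamma}\;\xrightarrow{\pr}\;\pr\brk{\partial^s[\T(d)]=\partial^s\Gamma}.
\]
Thus the task reduces to a routine first-and-second moment argument applied to this normalised count.

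For the first moment, vertex-transitivity of $\G$ reduces $\Erw[\lambda_\G(\chi_{\Gamma,s})]$ to $\pr[\partial^s[\G_1,1]=\partial^s\Gamma]$. I would then compare the breadth-first exploration of the depth-$s$ neighbourhood of vertex $1$ with the depth-$s$ truncation of $\T(d)$. Two ingredients suffice: each freshly discovered vertex has a $\Bin(n-O(1),d/n)$ number of neighbours outside the current exploration, which converges to $\Po(d)$ in total variation uniformly for bounded depth; and \whp\ the depth-$s$ ball around vertex $1$ contains no cycle, since the expected number of cycles of length at most $2s$ through a fixed vertex in $\gnp$ is $O(1/n)$. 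Coupling these two facts yields the limit of the first moment.

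For the second moment, I would show $\Var(\lambda_\G(\chi_{\Gamma,s}))\to 0$, which amounts to the assertion
\[
\pr\brk{\partial^s[\G_1,1]=\partial^s\Gamma=\partial^s[\G_2,2]}\;\longrightarrow\;\pr\brk{\partial^s[\T(d)]=\partial^s\Gamma}^2.
\]
The plan is to explore the two balls in parallel: with probability $1-O(1/n)$ they are vertex-disjoint and carry no connecting edge, since each ball contains $O(1)$ vertices \whp\ Conditional on this clean event the two explorations are conditionally independent and each marginal still converges to $\T(d)$, so the joint probability factorises in the limit and the variance vanishes. Together with the first moment calculation this gives the required convergence.

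The only real obstacle is bookkeeping: carrying out the Binomial-to-Poisson comparison with uniform error up to a fixed depth $s$, and quantifying the vanishing probability of entanglement of the two explorations. Both are standard estimates for $\gnp$ at constant average degree, so no new combinatorial or probabilistic idea is needed; this theorem serves as the background result on which the more delicate marked version (\Thm~\ref{Thm_lwc}) is then built.
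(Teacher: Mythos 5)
The paper offers no proof of \Thm~\ref{Thm_GW}: it is stated as a ``well-known fact'' about local weak convergence of $\G(n,d/n)$ to the $\Po(d)$ Galton--Watson tree, so there is no argument of the authors' to compare yours against. Your sketch is the standard proof of this background result and is essentially correct: the reduction from $\lim_n\Ldn=\atom_{\cL([\T(d)])}$ in $\cP^2(\cG)$ to convergence in probability of the random measure $\lambda_{\G}$, and then to convergence of $\lambda_{\G}(\chi_{\Gamma,s})$ for each fixed $\Gamma$ and $s$, is exactly the kind of manipulation the paper itself performs (e.g.\ in the proof of \Prop~\ref{Prop_main}); the first-moment step via breadth-first exploration, Binomial-to-Poisson approximation and the $O(1/n)$ bound on short cycles through a fixed vertex, and the second-moment step via two \whp\ disjoint and asymptotically independent explorations, are the classical ingredients. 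Two cosmetic points: $\G(n,p)$ is not vertex-transitive as a graph --- what you are using is that its distribution is invariant under permutations of $[n]$, which is what justifies $\Erw[\lambda_{\G}(\chi_{\Gamma,s})]=\pr[\partial^s[\G_1,1]=\partial^s\Gamma]$; and in the second moment one should condition on the two depth-$s$ balls having bounded size before arguing that they are disjoint and unlinked with probability $1-o(1)$, but since the ball sizes are tight this costs nothing. No genuine gap.
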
 
The spaces $\cG,\cP(\cG),\cP^2(\cG)$ are well-known to be Polish, i.e., complete, separable and metrizable.
Analogously, for any finite set $\cX$ the spaces $\cG_\cX,\cP(\cG_\cX),\cP^2(\cG_\cX)$ are Polish.

\subsection{The $k$-core threshold}
We build upon the following result which determines the $k$-core threshold and the asymptotic number of vertices in the $k$-core. Recall that $\G=\G(n,d/n)$.

\begin{theorem}[{\cite{Pittel}}]\label{Thm_PWS}
Let $k\geq3$.
The function $\lambda\in(0,\infty)\mapsto\lambda/\pr\brk{\Po(\lambda)\geq k-1}$ is continuous, tends to infinity as $\lambda\to0$ or $\lambda\to\infty$,
and has a unique local minimum, where it attains the value
	$$d_k=\inf\cbc{\lambda/\pr\brk{\Po(\lambda)\geq k-1}:\lambda>0}.$$
Furthermore, for any $d>d_k$ the equation $d=\lambda/\pr\brk{\Po(\lambda)\geq k-1}$ has precisely two solutions.
Let $\lambda_k(d)$ denote the larger one and define
	\begin{align*}
	\psi_k&:(d_k,\infty)\to(0,\infty),\qquad d\mapsto\pr\brk{\Po(\lambda_k(d))\geq k}.
	\end{align*}
Then $\psi_k$ is a strictly increasing continuous function.
Moreover, if $d<d_k$, then $\cC_k(\G)=\emptyset$ \whp,
while $\frac1n|\cC_k(\G)|$ converges to $\psi_k(d)$ in probability for $d>d_k$.
\end{theorem}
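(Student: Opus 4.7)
The plan has two parts: an analytic part establishing the properties of $\lambda_k$ and $\psi_k$, and a probabilistic part handling the $k$-core size. Both rest on identifying $\lambda_k(d)$ with $dp^*$, where $p^*$ is the largest fixed point of the map $\phi_{d,k}$ from (\ref{eqmain}).

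For the analytic part I would set $f(\lambda):=\lambda/\pr[\Po(\lambda)\geq k-1]$. Using the telescoping identity $\frac{\mathrm d}{\mathrm d\lambda}\pr[\Po(\lambda)\geq k-1] = \pr[\Po(\lambda) = k-2]$ one computes
\[
f'(\lambda) = \frac{g(\lambda)}{\pr[\Po(\lambda)\geq k-1]^2},\qquad g(\lambda):=\pr[\Po(\lambda)\geq k-1] - \lambda\pr[\Po(\lambda) = k-2].
\]
A short calculation gives $g(0)=0$, $g(\lambda)\sim -(k-2)\lambda^{k-1}/(k-1)!$ near $0$ (negative for $k\geq 3$), $g(\lambda)\to 1$ as $\lambda\to\infty$, and $g'(\lambda)\propto \lambda-(k-2)$, so $g$ is strictly unimodal with a unique positive zero. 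Together with $f(\lambda)\to\infty$ at both endpoints, this shows $f$ is strictly decreasing then strictly increasing, with unique minimum value $d_k$. Hence for $d>d_k$ the equation $f(\lambda)=d$ has exactly two solutions; on the right branch the implicit function theorem yields the continuity and strict monotonicity of $\lambda_k(\cdot)$ and hence of $\psi_k$. Via the substitution $p=\lambda/d$, the equation $f(\lambda)=d$ is equivalent to $p=\phi_{d,k}(p)$, so $p^*=\lambda_k(d)/d$ is the largest positive fixed point of $\phi_{d,k}$ and $\psi_k(d)=\pr[\Po(dp^*)\geq k]$.

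For the probabilistic part I would iterate Warning Propagation (\ref{messages})--(\ref{marks}) on $\G$ for a bounded number of rounds $t$. On the infinite $\Po(d)$ Galton--Watson tree the probability that the root is still marked after $t$ rounds is exactly $\pr[\Po(dp_t)\geq k]$, where $p_t=\phi_{d,k}^t(1)$ is decreasing in $t$ with $p_t\to p^*$. Because WP at time $t$ is a depth-$t$ local functional, a standard concentration argument (an Azuma/Efron--Stein bound on the peeling process, or a second-moment calculation over rooted depth-$t$ neighbourhoods of $\G$) gives $|\{v:\mu_v(t|\G)=1\}|=(\pr[\Po(dp_t)\geq k]+o(1))n$ w.h.p. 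Since the marked set at every time contains $\cC_k(\G)$, this yields the upper bound $|\cC_k(\G)|\leq(\psi_k(d)+o(1))n$. In the subcritical regime $d<d_k$ the analytic part shows $\phi_{d,k}$ has no positive fixed point and hence $p_t\to 0$, giving $|\cC_k(\G)|=o(n)$; a complementary first-moment count of induced subgraphs of minimum degree $\geq k$ (again calibrated by $f$) rules out any $k$-core of sub-linear positive size, so $\cC_k(\G)=\emptyset$ w.h.p.

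The matching lower bound $|\cC_k(\G)|\geq(\psi_k(d)-\eps)n$ for $d>d_k$ is where the main difficulty lies: after any bounded $t$ the WP-marked set strictly contains $\cC_k(\G)$, and one must show that all subsequent peelings remove only $o(n)$ vertices as $t\to\infty$. Equivalently, one needs a quantitative stability estimate showing that $\phi_{d,k}$ acts as a strict contraction near $p^*$, so that no further linear-sized cascade triggered by vertices whose marked-degree is exactly $k-1$ can propagate. This is the crux of the branching-process proof and is carried out, in different guises, in~\cite{Pittel,MolloyCores,Riordan}.
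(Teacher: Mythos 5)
This statement is quoted from Pittel--Wormald--Spencer and is not proved in the paper at all; it is one of the external inputs (the paper's own Warning Propagation analysis, in particular \Lem~\ref{Prop_bottomUp}, \emph{uses} \Thm~\ref{Thm_PWS} to close the chain of approximations in (\ref{eqOliver}), so none of the paper's machinery can be recycled to prove it without circularity). Your analytic part is correct and essentially complete: the telescoping identity for $\frac{\dd}{\dd\lambda}\pr[\Po(\lambda)\geq k-1]$, the sign analysis of $g(\lambda)=\pr[\Po(\lambda)\geq k-1]-\lambda\pr[\Po(\lambda)=k-2]$ near $0$ and at infinity, and the unimodality of $g$ via $g'\propto\lambda-(k-2)$ all check out, and the substitution $p=\lambda/d$ correctly identifies $\lambda_k(d)/d$ with the largest fixed point $p^*$; this is consistent with (and slightly finer than) the paper's own \Lem~\ref{Prop_fix}.

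The genuine gap is the one you name yourself: the lower bound $|\cC_k(\G)|\geq(\psi_k(d)-\eps)n$ is not proved but deferred to \cite{Pittel,MolloyCores,Riordan}. Everything you actually establish (monotone WP marks, local convergence of the depth-$t$ functional, concentration) only gives $|\cC_k(\G)|\leq(\psi_k(d)+o(1))n$, because the marked set at every finite time contains the core. To get the matching lower bound one must show that continuing the peeling beyond any bounded number of rounds removes only $o(n)$ further vertices, and this requires a substantive additional argument -- e.g.\ the differential-equation tracking of the full peeling process in \cite{Pittel}, Molloy's ``once the deletion rate drops below $\delta$ the process terminates quickly'' expansion argument, or Riordan's coupling -- none of which is supplied or even sketched beyond the phrase ``quantitative stability estimate.'' (The contraction of $\phi_{d,k}$ near $p^*$, which the paper proves in \Lem~\ref{Prop_fix}, controls the idealised tree recursion but does not by itself rule out a linear-sized cascade on $\G$; converting it into a statement about the graph is precisely the hard step.) The subcritical claim $\cC_k(\G)=\emptyset$ \whp\ is likewise only gestured at via an unexecuted first-moment count, though that step is genuinely routine. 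As written, the proposal is a correct outline with the theorem's main content outsourced to the very references it is meant to reprove.
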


We will use the following lemma to establish a connection between Warning Propagation and the formula for the size of the $k$-core from \Thm~\ref{Thm_PWS}.
Although similar statements are implicit in~\cite{MolloyCores,Riordan}, we include the simple proof for the sake of completeness.

\begin{lemma}\label{Prop_fix}
Suppose $d>d_k$
and let $p^*$ be the largest fixed point of 
the function $\phi_{d,k}$ from (\ref{eqmain}).
Then $\phi_{d,k}$ is contracting on $[p^*,1]$.
Moreover,
	$\psi_k(d)=\pr\brk{\Po(dp^*)\geq k} = \phi_{d,k+1}(p^*).$
\end{lemma}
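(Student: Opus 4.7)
The plan is to translate the lemma into Theorem~\ref{Thm_PWS} via the substitution $\lambda=dp$. Under this substitution the fixed-point equation $p=\phi_{d,k}(p)$ becomes $d=\lambda/\pr\brk{\Po(\lambda)\geq k-1}$, which is precisely the equation $d=f(\lambda)$ for the function $f$ appearing in Theorem~\ref{Thm_PWS}. Since $d>d_k$, this equation has exactly two positive roots, the larger being $\lambda_k(d)$, and the positive fixed points of $\phi_{d,k}$ correspond bijectively to these roots via $\lambda\mapsto\lambda/d$. Hence $p^*=\lambda_k(d)/d$, and the second identity of the lemma is immediate:
\[
\psi_k(d)=\pr\brk{\Po(\lambda_k(d))\geq k}=\pr\brk{\Po(dp^*)\geq k}=\phi_{d,k+1}(p^*).
\]

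For the contraction claim, termwise differentiation gives $\phi_{d,k}'(p)=d\,\pr\brk{\Po(dp)=k-2}$. Substituting $d=\lambda_k(d)/\pr\brk{\Po(\lambda_k(d))\geq k-1}$ at $p=p^*$ yields
\[
\phi_{d,k}'(p^*)=\frac{\lambda_k(d)\,\pr\brk{\Po(\lambda_k(d))=k-2}}{\pr\brk{\Po(\lambda_k(d))\geq k-1}}.
\]
The key input is that $\lambda_k(d)$ is the \emph{larger} root of $f(\lambda)=d$ and $f$ has a unique interior minimum by Theorem~\ref{Thm_PWS}, so $f'(\lambda_k(d))>0$. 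A quotient-rule calculation shows that this derivative condition is equivalent to $\pr\brk{\Po(\lambda_k(d))\geq k-1}>\lambda_k(d)\,\pr\brk{\Po(\lambda_k(d))=k-2}$, whence $\phi_{d,k}'(p^*)<1$.

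To promote this pointwise bound into a contraction on the entire interval $[p^*,1]$, I would exploit the fact that $\phi_{d,k}'(p)=d^{k-1}p^{k-2}\eul^{-dp}/(k-2)!$ is unimodal in $p$ with a single peak at $p=(k-2)/d$. Provided $dp^*>k-2$, this forces $\phi_{d,k}'$ to be strictly decreasing on $[p^*,1]$, and the mean value theorem then yields a Lipschitz constant equal to $\phi_{d,k}'(p^*)<1$. The step I expect to be the main obstacle is the inequality $\lambda_k(d)>k-2$: it suffices to show $f'(k-2)<0$, because then the minimiser of $f$ lies strictly to the right of $k-2$ and so does the larger root $\lambda_k(d)$. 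A direct computation reduces this to $\sum_{m\geq k-1}(k-2)^m/m!<(k-2)^{k-1}/(k-2)!$. Bounding the factors $(k-1)!/m!\leq 1/k^{m-k+1}$ converts the left hand side into a geometric series summing to $k/2\cdot(k-2)^{k-1}/(k-1)!$, and since $k/2<k-1$ for every $k\geq 3$, the inequality follows.
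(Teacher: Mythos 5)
Your proof is correct, and the argument for the contraction takes a genuinely different route from the paper's. The first half (the substitution $\lambda=dp$, the identification $p^*=\lambda_k(d)/d$, and the identity $\psi_k(d)=\pr\brk{\Po(dp^*)\geq k}=\phi_{d,k+1}(p^*)$) coincides with what the paper does. For the contraction the paper argues qualitatively: it computes the second derivative to show $\phi_{d,k}$ is convex on $[0,(k-2)/d]$ and concave beyond, distinguishes the cases in which $\phi_{d,k}$ is tangent to, or crosses, the identity at $p^*$, rules out tangency by comparing with $\phi_{d',k}$ for $d_k<d'<d$, and then reads off that the derivative is below one on $[p^*,1]$ from concavity. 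You instead proceed quantitatively: you translate $\phi_{d,k}'(p^*)<1$ into $f'(\lambda_k(d))>0$ for $f(\lambda)=\lambda/\pr\brk{\Po(\lambda)\geq k-1}$, and then supply the genuinely new ingredient, namely the explicit estimate $f'(k-2)<0$ via the geometric-series bound $(k-1)!/m!\leq k^{-(m-k+1)}$. Your computation checks out (including the case $k=3$), and it places the minimiser of $f$, hence $\lambda_k(d)=dp^*$, strictly above $k-2$, so that $\phi_{d,k}'$ is decreasing on $[p^*,1]$ and the mean value theorem finishes the job. What your route buys is a perturbation-free, self-contained argument together with the useful by-product $dp^*>k-2$; the paper's route avoids the series estimate at the cost of a somewhat delicate case analysis.

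One step you should tighten. From ``$\lambda_k(d)$ is the larger root and $f$ has a unique local minimum'' you may only conclude $f'(\lambda_k(d))\geq 0$: a smooth function with a unique local minimum can still have isolated non-extremal critical points to the right of it, and $f'(\lambda_k(d))\geq 0$ would only give $\phi_{d,k}'(p^*)\leq 1$, which is not enough for a contraction constant strictly below one. The strict inequality follows from ingredients you already have: setting $g(\lambda):=\pr\brk{\Po(\lambda)\geq k-1}-\lambda\,\pr\brk{\Po(\lambda)=k-2}$, so that $f'$ and $g$ have the same sign, one finds $g'(\lambda)=-\lambda\,\frac{\dd}{\dd\lambda}\pr\brk{\Po(\lambda)=k-2}>0$ for $\lambda>k-2$, since the Poisson point mass at $k-2$ is decreasing in its mean beyond $\lambda=k-2$ (this is the same unimodality you invoke for $\phi_{d,k}'$). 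As $g$ vanishes at the minimiser of $f$, and both the minimiser and $\lambda_k(d)$ exceed $k-2$ with $\lambda_k(d)$ strictly larger (the two roots are distinct for $d>d_k$), it follows that $g(\lambda_k(d))>0$ and hence $f'(\lambda_k(d))>0$ strictly. With that one sentence added, your proof is complete.
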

\begin{proof}
Let 
$\varphi_k(x)=
\pr \brk{\Po(x)\ge k-1}$
for $x\geq 0$. Then 
$\phi_{d,k}(p)=\varphi_k(dp).$
Moreover, $d=\lambda/\varphi_k(\lambda)$ iff $\phi_{d,k} (\lambda/d)=\lambda/d$,
i.e.\ $\lambda$ is a solution to the equation $d=\lambda/\varphi_k(\lambda)$ iff $\lambda/d$ is a fixed point of $\phi_{d,k}$.
Since $p^*=p^*(d,k)$ is the largest fixed point of $\phi_{d,k}$, it holds that
$p^*=\lambda_k(d)/d,$
whence \Thm~\ref{Thm_PWS} entails
$$\psi_k(d)=\pr\br{\Po(\lambda_k(d))\geq k}=\pr\br{\Po(dp^*)\geq k} = \phi_{d,k+1}(p^*).$$

We next show that $p^*>0$ for $k\geq 3$ and $d>d_k$, which is equivalent to $\lambda_k(d) > 0$.
By Theorem~\ref{Thm_PWS}, 
$ d_k = \inf \left\{\left.\lambda/\varphi_k(\lambda)\right|\lambda>0\right\}.$
Note that $\lambda_k(d)$ is a strictly increasing function in $d$
and so for $d > d_k$ we find
$\lambda_k(d)>\lambda_k(d_k) \ge 0 $
as required.

We next aim to prove that $\phi_{d,k}$ is a contraction on $[p^*,1]$. We consider the derivatives of $\varphi_k$:
 	\begin{align*}
 	\frac{\partial}{\partial x}\varphi_k(x)&=\frac{1}{(k-2)!\exp(x)}x^{k-2}\geq0,&
	\frac{\partial^2}{\partial x^2}\varphi_k(x)&=\frac{1}{(k-2)!\exp(x)}(-x+k-2)x^{k-3}.
	\end{align*}
Using
$\frac{\partial^i}{\partial p^i}\phi_{d,k}(p)=d^i\frac{\partial^i}{\partial x^i}\left.\varphi_k(x)\right|_{x=dp}$
we obtain
	\begin{align*}
	\frac{\partial}{\partial p}\phi_{d,k}(p)&\geq 0\qquad\mbox{for all $p\in[0,1]$,}&
	{\rm sgn}\left(\frac{\partial^2}{\partial p^2}\phi_{d,k}(p)\right)&= {\rm sgn}\left( \frac{k-2}{d}-p \right).
	\end{align*}
Hence, $\partial^2\phi_{d,k}/\partial p^2$ has only one root in $[0,1]$ and $\phi_{d,k}$ is convex on $[0,(k-2)/d]$ 
and concave on $((k-2)/d,1]$. Together with  $\phi_{d,k}(1)<1$ and the fact that $p^* >0$, this implies that one of two possible cases can occur:
$\phi_{d,k}$ either has (apart from the trivial fixed point at $0$) one additional fixed point $p^*>0$, where $\phi_{d,k}$ is tangent to the identity, or two $p^*>p_1>0$, where
$\phi_{d,k}$ crosses the identity. For the purposes of this proof, the latter case is essentially equivalent to a third possible case, namely that $\phi_{d,k}$ has derivative greater than (or equal to) one at the point $p=0$, and so is initially greater than the identity function, but crosses it at point $p^*(d,k)$.

We claim that the first case does not occur for $d>d_k$. Suppose it does and let $d'$ be such that $d_k < d' < d$. Then 
$\phi_{d',k}(p)< \phi_{d,k}(p) \le p$
for all $p>0$. However, this means that $p^*(d',k)=0$, contradicting what we have already proved.
Thus we may assume that the second case holds. Then it follows from $\phi_{d,k}(1)<1$ and the fact that 
$\partial^2\phi_{d,k}/\partial p^2$ changes its sign only once in $[0,1]$, that $\phi_{d,k}$ is concave on $[p^*,1].$ Since $\phi_{d,k}$ is concave 
and monotonically increasing on $[p^*,1]$ its derivative is less than one and $\phi_{d,k}$ is contracting on this interval. 
\end{proof}

\noindent
{\bf\em From now on we assume that $k\geq3$ and that $d>d_k$.
	Further, $p^*$ signifies the largest fixed point of (\ref{eqmain}).}

\section{Warning Propagation and the $k$-core}\label{Sec_WP}

\subsection{Convergence to the $k$-core}
The aim in this section is to reduce the study of the $k$-core on the random graph $\G=\G(n,d/n)$
to the investigation of Warning Propagation on the Galton Watson tree $\T(d)$. 
We start with the following simple observation that strongly resembles the application of Warning Propagation to the $k$-XORSAT problem
	(cf.~\cite[Chapter~19]{MM}).

\begin{lemma}\label{Lemma_WP}
Let $G$ be a locally finite graph.
\begin{enumerate}
\item If $v,w$ are adjacent and $t\geq0$, then $\mu_{v\to w}(t+1|G)\leq\mu_{v\to w}(t|G)$.
	Moreover, $\mu_v(t+1|G)\leq\mu_v(t|G)$ for all vertices $v$.
\item For any $t\geq0$ we have that the $k$-core $\core_k(G)\subset\cbc{v\in V(G):\mu_v(t|G)=1}$.
\item For any vertex $v$ the limit \ $\lim_{t\to\infty}\mu_v(t|G)$ exists
	and $v\in \core_k(G)$ iff \ $\lim_{t\to\infty}\mu_v(t|G)=1$.
\end{enumerate}
\end{lemma}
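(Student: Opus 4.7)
The plan is to prove the three parts in sequence, with each one fuelling the next.

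For part (1), I would induct on $t$. The base case is immediate since $\mu_{v\to w}(1|G)\leq 1=\mu_{v\to w}(0|G)$. For the induction step the update rule (\ref{messages}) is monotone in its inputs, so if $\mu_{u\to v}(t|G)\leq\mu_{u\to v}(t-1|G)$ for every edge, then every sum $\sum_{u\in\partial v\setminus w}\mu_{u\to v}(t|G)$ can only shrink and hence so can the thresholded indicator. The same reasoning yields monotonicity of the marks via (\ref{marks}).

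For part (2), I would prove the stronger statement that $\mu_{v\to w}(t|G)=1$ for every $v\in\core_k(G)$, every neighbour $w$ of $v$ (in or out of the core), and every $t\geq 0$, again by induction on $t$. The base case is trivial. For the inductive step, $v\in\core_k(G)$ has at least $k$ neighbours inside $\core_k(G)$, so even after excluding $w$, at least $k-1$ such neighbours lie in $\partial v\setminus w$ and by the induction hypothesis send message $1$ to $v$, giving $\mu_{v\to w}(t+1|G)=1$. The claim on marks then drops out from the same degree-$\geq k$ count applied via (\ref{marks}).

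For part (3), the existence of the pointwise limits $\mu_{v\to w}^*(G):=\lim_t\mu_{v\to w}(t|G)$ and $\mu_v^*(G):=\lim_t\mu_v(t|G)$ is immediate from part (1), since bounded monotone $\cbc{0,1}$-valued sequences are eventually constant, and part (2) already gives $\core_k(G)\subseteq S:=\cbc{v\in V(G):\mu_v^*(G)=1}$. For the reverse inclusion the natural strategy is to show that the subgraph of $G$ induced on $S$ has minimum degree $\geq k$; since the $k$-core is the unique maximal subgraph with this property (the union of two such subgraphs still has minimum degree $\geq k$), this would force $S\subseteq\core_k(G)$. Fix $v\in S$. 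Because $G$ is locally finite, only finitely many messages feed into $v$ or into any neighbour of $v$, so by part (1) there is a finite time $t_0$ after which all of these messages are stable; at such times $\mu_v(t|G)=1$ yields $\sum_{u\in\partial v}\mu_{u\to v}^*(G)\geq k$, so at least $k$ neighbours $w$ of $v$ satisfy $\mu_{w\to v}^*(G)=1$. The crux is to show each such $w$ lies in $S$: the reciprocity observation is that $v$ also sends a $1$ back to $w$, because subtracting the single term $\mu_{w\to v}^*(G)\leq 1$ from the sum $\geq k$ leaves $\sum_{u\in\partial v\setminus w}\mu_{u\to v}^*(G)\geq k-1$, whence $\mu_{v\to w}^*(G)=1$. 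Combining this with $\sum_{u\in\partial w\setminus v}\mu_{u\to w}^*(G)\geq k-1$ (the defining inequality of $\mu_{w\to v}^*(G)=1$) gives $\sum_{u\in\partial w}\mu_{u\to w}^*(G)\geq k$, so $\mu_w^*(G)=1$ and $w\in S$.

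The main obstacle is exactly this last reciprocity step: the directedness of the messages makes it tempting to reason only with information flowing into $v$, whereas concluding $w\in S$ genuinely requires the symmetric fact that $v$ sends a $1$ back to $w$. Local finiteness plays a quiet but essential role, guaranteeing that the per-vertex stabilisation time $t_0$ exists even though the infinite graph might never globally reach its Warning Propagation fixed point.
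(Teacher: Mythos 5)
Your proposal is correct and follows essentially the same route as the paper's proof: the same monotonicity induction for (1), the same strengthened induction hypothesis on outgoing messages for (2), and for (3) the same argument that the set of vertices with limiting mark $1$ induces a subgraph of minimum degree at least $k$, using local finiteness to obtain a per-vertex stabilisation time and the same "reciprocity" step (subtracting one term from a sum of at least $k$ to get the backward message) to conclude that the $k$ relevant neighbours also have limiting mark $1$.
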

\begin{proof}
The first claim follows from a simple induction on $t$: If $\mu_{v\to w}(t|G)\leq\mu_{v\to w}(t-1|G)$ for all $\{v,w\} \in E(G)$ and for a fixed $t\geq 0$, we obtain
	$\mu_{v\to w}(t+1|G)
	  \leq\mu_{v\to w}(t|G)$
by (\ref{messages}). In particular this implies that $\mu_v(t+1|G)\leq\mu_v(t|G)$ for all $v\in [n]$ by~\eqref{marks}.

To obtain (2), we  show by induction that, in fact, any vertex $v$ in the core satisfies
\begin{align*}
A(t): & \hspace{0.2cm} \mu_{v\to w}(t|G)=1 \hspace{0.2cm} \mbox{for all } w\in \partial(G,v)\\
B(t): & \hspace{0.2cm} \mu_v(t|G) =1
\end{align*}
for all $t\ge 0$. Property $A(0)$ is certainly true because of our starting conditions. We will show that for all $t\ge 0$, $A(t)$ implies $B(t)\wedge A(t+1)$. This follows because $v$ has at least $k$ neighbours $u$ in the core, all with $\mu_{u\to v}(t|G)=1$ if $A(t)$ holds,
and so~\eqref{marks} implies that $B(t)$ holds while~\eqref{messages} implies that $A(t+1)$ holds.

The first assertion of~(3) holds because $(\mu_v(t))_t$ is decreasing by (1) and bounded by definition. For the second assertion, let $G'$ be the subgraph induced by the vertices with $\lim_{t\to \infty}\mu_v(t|G)=1$. We need to show that $G' \subset \core_k(G)$. Since $G$ is locally finite, a vertex $v$ has a finite number of neighbours
and so there is a time $t_0$ such that for all neighbours $u$ of $v$ and for all times $t\ge t_0$,
the messages from $u$ to $v$ and from $v$ to $u$ and the marks on $u$ and $v$ remain constant.
Since $\mu_v(t_0|G)=1$, by~\eqref{marks} we have $\sum_{u \in \partial v}\mu_{u\to v}(t_0|G) \ge k$
and therefore by~\eqref{messages}, $\mu_{v\to u}(t_0+1|G)=1$ for all $u \in \partial v$.
This means that for all neighbours $u$ with $\mu_{u\to v}(t_0+1|G)=1$, by~\eqref{marks} we have $\mu_u(t_0+1|G)=1$.
Therefore such vertices are in $G'$ and there are at least $k$ of them. This shows that $v$ has degree at least $k$ in $G'$.
Since $v$ was arbitrary, this shows that $G'$ has minimum degree at least $k$ and therefore $G' \subset \core_k(G)$ as required.
\end{proof}

\subsection{Warning Propagation on trees}\label{Sec_fix}
We proceed by relating Warning Propagation on the random tree $\T(d)$ to the fixed point problem from \Lem~\ref{Prop_fix} and thus to \Thm~\ref{Thm_PWS}.
Assume that $(T,v_0)$ is a rooted locally finite tree.
Then a vertex $v\neq v_0$ has a parent $u$ (namely, the neighbour of $v$ on the path to $v_0$).
We use the shorthand
	$$\mu_{v\toboss}(t|T)=\mu_{v\to u}(t|T).$$
Furthermore, for $t\ge 0$ we set
	$$\mu_{v_0\toboss}(0|T)=1\quad\mbox{and}\quad
		\mu_{v_0\toboss}(t+1|T)=\vecone\cbc{\sum_{w\in\partial v_0}\mu_{w\toboss}(t|T)\geq k-1},$$
i.e.\ $\mu_{v_0\toboss}(t|T)$ is the message that $v_0$ would send to its parent if it had one.

\noindent
We observe that on the random tree $\T(d)$ the limit $\mu_{v_0\toboss}(t|\T(d))$ as $t\to\infty$ exists almost surely.

\begin{lemma}\label{Lemma_as}
The sequence $(\mu_{v_0\toboss}(t|\T(d)))_{t\geq1}$ converges almost surely and thus in $L^1$ to a random variable $\mu^*(\T(d))\in\cbc{0,1}$
whose expectation is equal to $p^*$.
\end{lemma}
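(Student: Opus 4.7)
The plan is to break the claim into three pieces: (a) almost sure convergence from monotonicity, (b) $L^1$ convergence from boundedness, and (c) identification of the limit's expectation via a fixed-point recursion driven by $\phi_{d,k}$.

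For (a), I would first verify that $(\mu_{v_0\toboss}(t|\T(d)))_{t\geq 1}$ is almost surely non-increasing in $t$. For each child $w$ of $v_0$, Lemma~\ref{Lemma_WP}(1) applied to the edge $\cbc{w,v_0}$ yields $\mu_{w\toboss}(t+1|\T(d))=\mu_{w\to v_0}(t+1|\T(d))\leq \mu_{w\to v_0}(t|\T(d))=\mu_{w\toboss}(t|\T(d))$. Hence the sum in the defining indicator
\begin{equation*}
\mu_{v_0\toboss}(t+1|\T(d))=\vecone\cbc{\sum_{w\in\partial v_0}\mu_{w\toboss}(t|\T(d))\geq k-1}
\end{equation*}
is non-increasing in $t$, and so is $\mu_{v_0\toboss}(t|\T(d))$ itself. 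A $\cbc{0,1}$-valued non-increasing sequence stabilises after finitely many (random) steps, giving almost sure convergence to some $\mu^*(\T(d))\in\cbc{0,1}$. For (b), dominated convergence (with dominating function $1$) immediately upgrades this to $L^1$ convergence.

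For (c), set $p_t:=\EX[\mu_{v_0\toboss}(t|\T(d))]$; by (b) it suffices to show that $\lim_{t\to\infty} p_t=p^*$. I would derive the recursion $p_{t+1}=\phi_{d,k}(p_t)$ with $p_0=1$ from the Galton--Watson structure: conditional on $v_0$ having $N\sim\Po(d)$ children $w_1,\dots,w_N$, the subtrees rooted at the $w_i$ are independent copies of $\T(d)$, so the messages $\mu_{w_i\to v_0}(t|\T(d))$ are i.i.d.\ $\Be(p_t)$ given $N$. Poisson thinning then yields $\sum_{i=1}^N \mu_{w_i\to v_0}(t|\T(d))\sim\Po(dp_t)$, whence
\begin{equation*}
p_{t+1}=\pr\brk{\Po(dp_t)\geq k-1}=\phi_{d,k}(p_t).
\end{equation*}
Starting from $p_0=1\geq p^*$, monotonicity of $\phi_{d,k}$ together with the identity $\phi_{d,k}(p^*)=p^*$ give $p_t\geq p^*$ for all $t$ by induction, while $p_t$ is non-increasing by (a). Consequently $p_t$ decreases to some fixed point $q\in[p^*,1]$ of $\phi_{d,k}$, and Lemma~\ref{Prop_fix} (contraction on $[p^*,1]$) forces $q=p^*$.

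The only delicate point is the identification of $q$ with the \emph{largest} fixed point $p^*$ rather than some smaller fixed point of $\phi_{d,k}$; this is precisely what Lemma~\ref{Prop_fix} is designed to supply, since the iteration is trapped in $[p^*,1]$ by monotonicity and the contraction property pins down the unique fixed point there. The rest is bookkeeping, as the $\cbc{0,1}$-valued nature of the messages makes the interchange of limit and expectation cost-free.
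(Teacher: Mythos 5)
Your proposal is correct and follows essentially the same route as the paper: both rely on the monotonicity from Lemma~\ref{Lemma_WP} for almost sure (hence, by boundedness, $L^1$) convergence, and both exploit the recursive Galton--Watson structure to derive the recursion $p^{(t+1)}=\phi_{d,k}(p^{(t)})$ with $p^{(0)}=1$ and then invoke Lemma~\ref{Prop_fix} to conclude $p^{(t)}\to p^*$. Your explicit justification that the iteration stays trapped in $[p^*,1]$ and that the contraction property rules out convergence to a smaller fixed point is a welcome elaboration of a step the paper leaves implicit.
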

\begin{proof}
Let $p^{(t)}=\Erw_{\T(d)}[\mu_{v_0\toboss}(t|\T(d))]$ for $t\geq0$ (so in particular $p^{\bc 0}=1$).
The random tree $\T(d)$ is recursive in the following sense: the trees pending on the children of the root
are distributed as independent copies of the random tree $\T(d)$ itself.
Therefore, given the degree $d_0$ of $v_0$, the messages $(\mu_{v\toboss}(t|\T(d)))_{v\in\partial v_0}$ are 
mutually independent Bernoulli variables with mean $p^{(t)}$.
Since the degree of $v_0$ is a Poisson variable with mean $d$, we conclude that
        $$p^{(t+1)}=\pr\br{\Po\br{dp^{(t)}}\geq k-1}=\phi_{d,k}\br{p^{(t)}}\qquad\mbox{for any }t\geq0.$$
Hence, \Lem~\ref{Prop_fix} implies that $\lim_{t\to\infty}p^{(t)}=p^*$.
Since 
the sequence $(\mu_{v_0\toboss}(t|\T(d)))_{t\geq1}$ is monotonically decreasing 
by \Lem~\ref{Lemma_WP}, the assertion follows from the monotone convergence theorem.
\end{proof}

\subsection{From random graphs to trees}

Given $t\geq0$ let $\T_{t}(d,k)$ be the random $\cbc{0,1}$-marked tree obtained from $\T(d)$ by marking each vertex $v$ with  $\mu_{v}(t|\T(d))\in\cbc{0,1}$ as defined in~\eqref{initialisation}-\eqref{marks}.
We recall the definition of $\Lambda_{d,k,n}$ from~(\ref{eqLambdadkn}).
The aim in this section is to prove

\begin{proposition}\label{Prop_main}
The limits
	\begin{align*}
	\theta_{d,k}&:=\lim_{t\to\infty}\cL([\T_t(d,k)]),&\hspace{-2cm}\Lambda_{d,k}&:=\lim_{n\to\infty}\Lambda_{d,k,n}
	\end{align*}
exist and $\Lambda_{d,k}=\atom_{\theta_{d,k}}$.
\end{proposition}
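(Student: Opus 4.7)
The plan is to prove convergence on the tree side and on the graph side separately, and then identify the two limits. First, for the existence of $\theta_{d,k}$: by \Lem~\ref{Lemma_WP}(3), on any locally finite graph (in particular on $\T(d)$, which is locally finite almost surely) the marks $\mu_v(t|\T(d))$ are monotonically non-increasing in $t$ and converge to $\vecone\cbc{v\in\cC_k(\T(d))}$.  Because every depth-$s$ neighborhood of the root is almost surely finite, the marks inside it stabilize after finitely many steps, so $\partial^s[\T_t(d,k)]=\partial^s[\T_\infty(d,k)]$ for all large $t$ almost surely, where $\T_\infty(d,k)$ denotes $\T(d)$ marked by membership in its $k$-core.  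Since the topology on $\cG_{\cbc{0,1}}$ is generated by the indicators $\chi_{\Gamma,s}$, this yields $[\T_t(d,k)]\to[\T_\infty(d,k)]$ almost surely, hence $\cL([\T_t(d,k)])\to\cL([\T_\infty(d,k)])=:\theta_{d,k}$ weakly.

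Next, for any fixed $t$ and any finite marked rooted tree $\tau$, I would observe from~\eqref{messages}--\eqref{marks} that the WP mark $\mu_v(t|\G)$ depends only on $\partial^t(\G,v)$, so the event $\partial^s[\G_v,v,\mu_\cdot(t|\G)]=\partial^s\tau$ is determined by $\partial^{s+t}(\G,v)$.  Setting $\theta_{d,k,t}:=\cL([\T_t(d,k)])$ and writing $\lambda_{k,\G,t}$ for the empirical distribution on $\G$ of the marking $\mu_\cdot(t|\G)$, \Thm~\ref{Thm_GW} together with a standard second-moment/concentration argument (exploiting that events based on bounded-radius neighborhoods at well-separated vertices are asymptotically independent in $\G(n,d/n)$) gives $\lambda_{k,\G,t}(\chi_{\tau,s})\to\theta_{d,k,t}(\chi_{\tau,s})$ in probability as $n\to\infty$, for each fixed $t$.

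The crucial step is to replace WP marks by true core marks.  By \Lem~\ref{Lemma_WP}(2), $\cC_k(\G)\subseteq\cbc{v:\mu_v(t|\G)=1}$ for every $t$, and by \Lem~\ref{Prop_bottomUp}, for any $\eps>0$ one can choose $t=t(\eps)$ so that $B_t:=\cbc{v:\mu_v(t|\G)=1}\setminus\cC_k(\G)$ satisfies $|B_t|\le\eps n$ \whp\  The marked neighborhoods $\partial^s[\G_v,v,\sigma_{k,\G}]$ and $\partial^s[\G_v,v,\mu_\cdot(t|\G)]$ agree at $v$ unless some $u\in B_t$ lies within distance $s$ of $v$, so the number of vertices where they disagree is at most $\sum_{u\in B_t}|V(\partial^s(\G,u))|$.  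Splitting $B_t$ according to whether $|V(\partial^s(\G,u))|\le M$ or $>M$ for a large constant $M$, the first part contributes at most $\eps nM$, while the second is bounded using
\[
\Erw_\G\brk{|V(\partial^s(\G,\vv))|\,\vecone\cbc{|V(\partial^s(\G,\vv))|>M}}\longrightarrow\Erw_{\T(d)}\brk{|V(\partial^s(\T(d),v_0))|\,\vecone\cbc{|V(\partial^s(\T(d),v_0))|>M}},
\]
which tends to $0$ as $M\to\infty$ by the Poisson tails of $\T(d)$.  Hence $|\lambda_{k,\G,t}(\chi_{\tau,s})-\lambda_{k,\G}(\chi_{\tau,s})|$ can be made arbitrarily small \whp\ by first choosing $M$ large, then $\eps$ small, then $t$ large, and finally $n$ large.

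Combining the three steps yields $\lambda_{k,\G}(\chi_{\tau,s})\to\theta_{d,k}(\chi_{\tau,s})$ in probability for every $s\ge0$ and every $\tau$; since the maps $\chi_{\Gamma,s}$ generate the topology on $\cG_{\cbc{0,1}}$, this is exactly $\lambda_{k,\G}\to\theta_{d,k}$ in probability in $\cP(\cG_{\cbc{0,1}})$, i.e.\ $\Lambda_{d,k,n}\to\atom_{\theta_{d,k}}$ in $\cP^2(\cG_{\cbc{0,1}})$.  I expect the main obstacle to be the third step: controlling $\sum_{u\in B_t}|V(\partial^s(\G,u))|$ from the bound $|B_t|\le\eps n$ alone would fail if a few ``heavy'' vertices dominated the ball-size sum, so the argument requires uniform-integrability-type control on $|V(\partial^s(\G,\cdot))|$ that goes slightly beyond bare local weak convergence, but which is accessible from the Poisson structure of $\G(n,d/n)$.
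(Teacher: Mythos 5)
Your proposal is correct and follows essentially the same route as the paper: fixed-$t$ local weak convergence of the Warning Propagation marks (the paper's \Lem~\ref{Prop_WP}), then swapping WP marks for core marks via \Lem~\ref{Prop_bottomUp} together with control on the number of vertices whose depth-$s$ ball meets a wrongly marked vertex. The only differences are cosmetic: you obtain the existence of $\theta_{d,k}$ from the monotone pointwise stabilisation of the marks (\Lem~\ref{Lemma_WP}(3)) rather than by conditioning on the boundary messages at depth $s+1$ and invoking \Lem~\ref{Lemma_as}, and you handle the ball-size issue by an explicit uniform-integrability bound where the paper uses the truncation sets $\cI(\ell_0,\G)$ and $\cJ(\G)$ — both are valid and your treatment of that step is, if anything, spelled out more carefully.
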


For a finite graph $G$ and $v\in V(G)$ let $\mu_{\nix}(t|G_v)$ denote the map $w\in V(G_v)\mapsto\mu_w(t|G_v)$.
Define
	\begin{align}\label{eqLambdadknt}
	\lambda_{k,G,t}&:=\frac1{|V(G)|}\sum_{v\in V(G)}\delta_{[G_v,v,\mu_{\nix}(t|G_v)]}\in\cP(\cG_{\cbc{0,1}}),&
	\Lambda_{d,k,n,t}&:=\Erw_{\G}[\atom_{\lambda_{k,\G,t}}]\in\cP^2(\cG_{\cbc{0,1}})
	\end{align}
(c.f.~\eqref{eqLambdadkn} and~\eqref{eqLambdadn}).
Thus, $\Lambda_{d,k,n,t}$ is the distribution of the neighbourhoods of the random graph $\G$ marked according to $\mu_{\nix}(t|\G)$.
The following lemma shows that the distribution of these marked neighbourhoods in $\G$ is described by
the distribution resulting from running Warning Propagation for $t$ rounds on the random tree $\T(d)$.

\begin{lemma}\label{Prop_WP}
We have \ $\lim_{n\to\infty}\Lambda_{d,k,n,t}=\atom_{\cL([\T_{t}(d,k)])}$ for any $t\geq0$.
\end{lemma}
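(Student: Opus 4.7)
My plan is to exploit the fact that Warning Propagation is a \emph{local} operation on the graph and then bootstrap from the convergence of the local neighbourhoods of $\G$ to the Galton--Watson tree $\T(d)$ supplied by \Thm~\ref{Thm_GW}. The whole argument amounts to rewriting $\Lambda_{d,k,n,t}$ as the pushforward of $\Ldn$ under a fixed continuous map $\cP(\cG)\to\cP(\cG_{\{0,1\}})$ and invoking the continuous mapping theorem.

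First I would prove, by a straightforward induction on $t$, the locality statement: for any locally finite $G$ and any $v\in V(G)$, the value $\mu_v(t\mid G)$ is determined by the rooted induced subgraph $\partial^{t+1}(G,v)$. Indeed, (\ref{messages}) and (\ref{marks}) show that $\mu_v(t\mid G)$ accesses only the messages $\mu_{u\to v}(t\mid G)$ with $u\in\partial v$, each of which depends inductively on a depth-$(t-1)$ ``tree of messages'' rooted at $u$ in $G\setminus v$; unrolling, every vertex and edge queried lies within $\partial^{t+1}(G,v)$, because the triangle inequality guarantees that the full neighbourhood in $G$ of any vertex at distance $\leq t$ from $v$ already sits inside that ball. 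Consequently, for every $s\geq 0$ the marked ball $\partial^s[G_v,v,\mu_{\nix}(t\mid G_v)]$ is a function of the unmarked ball $\partial^{s+t+1}[G,v]$ alone, and the WP computation of the marks is unaffected if $G_v$ is replaced by $\partial^{s+t+1}(G_v,v)$.

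Next I would package this locality as continuity of a canonical map. Define $\Phi_t:\cG\to\cG_{\{0,1\}}$ by $\Phi_t([G,v]):=[G_v,v,\mu_{\nix}(t\mid G_v)]$, which is well-defined because WP is invariant under graph isomorphism. The previous step says that $\partial^s\Phi_t([G,v])$ depends only on $\partial^{s+t+1}[G,v]$, so the preimage under $\Phi_t$ of each generating cylinder set $\{[\Gamma']\in\cG_{\{0,1\}}:\partial^s\Gamma'=\gamma\}$ is a union of cylinder sets in $\cG$ of depth $s+t+1$. Hence $\Phi_t$ is continuous, and therefore the pushforward operation $\mu\mapsto\Phi_t^*\mu$ is continuous from $\cP(\cG)$ to $\cP(\cG_{\{0,1\}})$, as is the induced map $\cP^2(\cG)\to\cP^2(\cG_{\{0,1\}})$. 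Since pushforward commutes with empirical averages, $\lambda_{k,G,t}=\Phi_t^*\lambda_G$ for every finite graph $G$, so $\Lambda_{d,k,n,t}$ is precisely the image of $\Ldn$ under the induced map on $\cP^2$.

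The conclusion is then immediate: by \Thm~\ref{Thm_GW}, $\Ldn\to\atom_{\cL([\T(d)])}$, so the continuity established in Step~2 yields
\[
\lim_{n\to\infty}\Lambda_{d,k,n,t}=\atom_{\Phi_t^*\cL([\T(d)])}=\atom_{\cL([\Phi_t(\T(d))])}=\atom_{\cL([\T_{t}(d,k)])},
\]
which is the claim. The only point requiring real care is the locality claim in Step~1, and specifically the subtlety that WP run on the whole component $G_v$ must agree, when restricted to vertices within distance $s$ of $v$, with WP run on the finite subgraph $\partial^{s+t+1}(G_v,v)$; once this is verified, the remainder of the argument is purely topological bookkeeping.
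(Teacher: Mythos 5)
Your proposal is correct and takes essentially the same route as the paper's (very terse) proof: locality of Warning Propagation --- the depth-$s$ marked ball is determined by a bounded-depth unmarked ball --- combined with \Thm~\ref{Thm_GW} and the continuity of the induced pushforward, which the paper leaves implicit in ``immediate from \Thm~\ref{Thm_GW}''. Your radius $s+t+1$ is in fact the careful choice (the paper writes $s+t$, which misses that the time-$1$ messages already require the degrees of vertices at distance $t$ from the root), but this off-by-one is immaterial to either argument.
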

\begin{proof}
Let $\tau$ be a rooted locally finite tree rooted at $v_0$.
Let $G$ be a graph and let $v\in V(G)$.
The construction of the Warning Propagation messages ensures that if $\partial^{s+t}[\tau,v_0]=\partial^{s+t}[G,v]$, then
	$\partial^{s}[\tau,v_0,\mu_{\nix}(t|\tau)]=\partial^{s}[G,v,\mu_{\nix}(t|G)]$.
Therefore, the assertion is immediate from \Thm~\ref{Thm_GW}.
\end{proof}

As a next step,  we show that running Warning Propagation for a bounded number $t$ of  rounds on $\G$ yields a very good approximation to the $k$-core.
Indeed, \Lem~\ref{Lemma_WP} shows that the $k$-core of $\G$ is contained in the set of all vertices $v$ such that $\mu_v(t|\G)=1$.
The following lemma, which is implicit in~\cite{MolloyCores,Riordan}, complements this statement.

\begin{lemma}\label{Prop_bottomUp}
For any $\eps>0$ there is $t>0$ such that
	$\abs{\cbc{v\in [n]:\mu_v(t|\G)=1}\setminus\core_k(\G)}\leq\eps n$ \whp\
\end{lemma}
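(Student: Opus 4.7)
The plan is a sandwich argument. Set $S_t := \cbc{v\in[n]:\mu_v(t|\G)=1}$; by \Lem~\ref{Lemma_WP}(2) we have $\core_k(\G)\subseteq S_t$, so $|S_t\setminus \core_k(\G)|=|S_t|-|\core_k(\G)|$. It therefore suffices to show that $|S_t|/n$ and $|\core_k(\G)|/n$ both concentrate in probability around the same value $\psi_k(d)$ -- the former in the limit $n\to\infty$ followed by $t\to\infty$ -- so that the difference can be made smaller than any $\eps>0$ by choosing $t$ large enough.

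The lower half is immediate: \Thm~\ref{Thm_PWS} gives $|\core_k(\G)|/n\to\psi_k(d)$ in probability. For the upper half I first analyse Warning Propagation on the tree $\T(d)$. By the recursive structure of $\T(d)$, exactly as in the proof of \Lem~\ref{Lemma_as}, the expected upward message $p^{(t)}:=\Erw[\mu_{v_0\toboss}(t|\T(d))]$ satisfies $p^{(t+1)}=\phi_{d,k}(p^{(t)})$ with $p^{(0)}=1$, and $p^{(t)}\to p^*$ by \Lem~\ref{Prop_fix}. The mark on the root depends on its $\Po(d)$ children, each of which sends an independent $\mathrm{Ber}(p^{(t)})$ message, so Poisson thinning yields
\[
\pi^{(t)} \; := \; \pr\brk{\mu_{v_0}(t|\T(d))=1} \; = \; \pr\brk{\Po(dp^{(t)})\ge k} \; = \; \phi_{d,k+1}(p^{(t)}),
\]
which tends to $\phi_{d,k+1}(p^*)=\psi_k(d)$ as $t\to\infty$, again by \Lem~\ref{Prop_fix}.

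Given $\eps>0$, I fix $t$ large enough that $\pi^{(t)}\le\psi_k(d)+\eps/3$. \Lem~\ref{Prop_WP} states that $\Lambda_{d,k,n,t}\to\delta_{\cL([\T_t(d,k)])}$ in $\cP^2(\cG_{\{0,1\}})$; since the limit is a Dirac mass, the random empirical measure $\lambda_{k,\G,t}$ itself converges in probability to $\cL([\T_t(d,k)])$ in $\cP(\cG_{\{0,1\}})$. Because the map $[G,v,\sigma]\mapsto\vecone\cbc{\sigma(v)=1}$ is a continuous bounded functional on $\cG_{\{0,1\}}$ (it is determined by $\partial^0$), integrating it against $\lambda_{k,\G,t}$ yields $|S_t|/n\to\pi^{(t)}$ in probability. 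Combining with \Thm~\ref{Thm_PWS} gives $|S_t\setminus\core_k(\G)|/n=|S_t|/n-|\core_k(\G)|/n\le\eps$ \whp

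The main conceptual point is that the Pittel-Wormald-Spencer formula for the core and the fixed-point analysis of Warning Propagation deliver matching limits: the identity $\psi_k(d)=\phi_{d,k+1}(p^*)$ from \Lem~\ref{Prop_fix} is precisely what makes the truncated-WP upper bound tight. The only mildly technical step is extracting concentration of $|S_t|/n$ from the local weak convergence statement of \Lem~\ref{Prop_WP}, which comes for free here because $\Lambda_{d,k,n,t}$ converges to a Dirac mass and the ``mark on the root'' is a depth-$0$, hence continuous, observable.
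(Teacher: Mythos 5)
Your proof is correct and follows essentially the same route as the paper's: both establish the sandwich $|S_t|/n \to_p x^{(t)} \to_{t\to\infty} \phi_{d,k+1}(p^*) = \psi_k(d) \leftarrow_p |\core_k(\G)|/n$, using \Lem~\ref{Prop_WP} for the left-hand convergence, \Lem s~\ref{Lemma_as} and~\ref{Prop_fix} for the tree recursion and fixed point, and \Thm~\ref{Thm_PWS} for the core, together with the containment $\core_k(\G)\subseteq S_t$ from \Lem~\ref{Lemma_WP}. Your additional detail on extracting concentration of $|S_t|/n$ from the local weak convergence via the depth-$0$ observable is a correct elaboration of what the paper calls ``immediate from \Lem~\ref{Prop_WP}''.
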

\begin{proof}
Let 
	$Y_n=\frac{1}{n}|\core_k(\G)|$
be the fraction of vertices in the core.
We need to compare $Y_n$ with
	$X_n^{(t)}=\frac{1}{n}|\{v\in [n]:\mu_v(t|\G)=1\}|$, i.e., 
the fraction of vertices marked with $1$ after $t$ iterations of Warning Propagation.
Let
	$$x^{(t)}=\Erw_{\T(d)}\brk{\mu_{v_{0}}(t|\T(d))}$$
denote the probability that the root $v_0$ is marked with $1$ after $t$ iterations of Warning Propagation on $\T(d).$ Finally, set $p^{(0)}=1$ and let 
$p^{(t+1)}=\phi_{d,k}\br{p^{(t)}}$ (with $\phi_{d,k}$ from (\ref{eqmain})). 

We will establish the following relations:
\begin{equation}\label{eqOliver}
X_n(t) \;\; \sim_p 
	\;\; x^{(t)} \quad 
	\stacksign{$t\to\infty$}\longrightarrow \quad \phi_{d,k+1}(p^*)  \;\;
	\sim_p \;\; Y_n(t).
\end{equation}
The leftmost approximation (that $X_n(t)$ converges to $x^{(t)}$ in probability) is immediate from \Lem~\ref{Prop_WP}.
With respect to the second relation, we obtain from \Lem~\ref{Lemma_as} that
	\begin{equation}\label{eqMyInd}
	\Erw_{\T(d)}\brk{\mu_{v_0\toboss}(t|\T(d))}\stacksign{$t\to\infty$}\longrightarrow
			p^{*}.
	\end{equation}
Since 
each child of $v_0$ can be considered a root of an  independent instance of $\T(d)$ to which we can apply (\ref{eqMyInd}), 
we obtain
	\begin{equation}\label{eq:markprob}
	x^{(t+1)} \;\; = \;\; \pr\br{\sum_{u\in\partial v}\mu_{u\toboss}(t|\T(d))\geq k} \quad \stacksign{$t\to\infty$}\longrightarrow \quad \pr\br{\Po\br{dp^{*}}\geq k} \;\; = \;\; \phi_{d,k+1}(p^{*}).
	\end{equation}
Finally, Theorem \ref{Thm_PWS} implies 
the rightmost convergence in~(\ref{eqOliver}), whence the assertion is immediate.
\end{proof}

\begin{proof}[Proof of \Prop~\ref{Prop_main}]
We begin by proving that the sequence $(\cL([\T_t(d,k)]))_{t}$ of probability measures on $\cG_{\{0,1\}}$ converges.
Since $\cP(\cG_{\{0,1\}})$ is a Polish space, it suffices to show that for any bounded continuous function $f:\cG_{\cbc{0,1}}\to\RR$
the sequence 
	$(\Erw_{\T(d)}[f([\T_t(d,k)])])_t$ converges.
In fact, because the toplogy on $\cG_{\cbc{0,1}}$ is generated by the functions from~(\ref{eqtop}),
we may assume that $f=\chi_{\Gamma,s}$ for some $\Gamma\in\cG_{\cbc{0,1}}$, $s\geq0$.
Hence,
	\begin{align*}
	\Erw_{\T(d)}[f([\T_t(d,k)])]&=\pr\brk{\partial^s[\T_t(d,k)]=\partial^s\Gamma}.
	\end{align*}
To show that $(\pr\brk{\partial^s[\T_t(d,k)]=\partial^s\Gamma})_t$ converges, let $\eps>0$ and let $\cT_{s+1}$ be the $\sigma$-algebra generated
by the unmarked tree structure $\partial^{s+1}\T(d)$ up to distance $s+1$ from the root.
Let $\cB(s+1)$ be the set of vertices at distance precisely $s+1$ from the root.
The structure of the random tree $\T(d)$ is recursive, i.e., given $\cT_{s+1}$ the tree pending on each vertex $v\in\cB(s+1)$ is
just an independent copy of $\T(d)$ itself.
Therefore, \Lem~\ref{Lemma_as} and the union bound imply that conditioned on $\cT_{s+1}$ 
the limits
	$$\mu_{v\toboss}(\T(d))=\lim_{t\to\infty}\mu_{v\toboss}(t|\T(d))\qquad(v\in\cB(s+1))$$
exist almost surely.
In addition, conditioned on the vector $(\mu_{v\toboss}(\T(d)))_{v\in\cB(s+1)}$, all limits $\lim_{t\to\infty}\mu_{x}(t|\T(d))$ for
vertices $x$ at distance at most $s$ from $v_0$ are determined. 
Consequently, the sequence $(\pr\brk{\partial^s[\T_t(d,k)]=\partial^s\Gamma|\cT_{s+1}})_t$ of random variables converges almost surely.
Hence, so does the sequence
$(\pr\brk{\partial^s[\T_t(d,k)]=\partial^s\Gamma})_t$.
As this holds for any $\Gamma,s$, the limit $\theta_{d,k}=\lim_{t\to\infty}\cL([\T_t(d,k)])$ exists.

As a next step we show that $(\Lambda_{d,k,n})_{n}$ converges.
Because $\Lambda_{d,k,n}\in\cP^2(\cG_{\cbc{0,1}})$, which is a Polish space as well, it suffices to prove that 
$(\int f\dd\Lambda_{d,k,n})_{n}$ converges for any continuous function $f:\cP(\cG_{\cbc{0,1}})\to\RR$ with compact support. 
\Lem~\ref{Prop_WP} already shows that $(\Lambda_{d,k,n,t})_{n}$ converges for any $t$.
Hence, so does $(\int f\dd\Lambda_{d,k,n,t})_{n}$.
Therefore we will compare $\int f\dd\Lambda_{d,k,n}$ and $\int f\dd\Lambda_{d,k,n,t}$.
Plugging in the definitions of $\Lambda_{d,k,n}$ and $\Lambda_{d,k,n,t}$ (\eqref{eqLambdadkn} and~\eqref{eqLambdadknt}), we find that

	\begin{align*}
	\int f\dd\Lambda_{d,k,n}&=\Erw_{\G}\brk{f(\lambda_{k,\G})},&
	\int f\dd\Lambda_{d,k,n,t}&=\Erw_{\G}\brk{f(\lambda_{k,\G,t})}.
	\end{align*}
Hence,
	\begin{align}\label{eqconv666}
	\abs{\int f\dd\Lambda_{d,k,n}-\int f\dd\Lambda_{d,k,n,t}}\leq\Erw_{\G}\abs{f(\lambda_{k,\G})-f(\lambda_{k,\G,t})}.
	\end{align}
To bound the last term we will show that
\begin{align}\label{convp}
 \abs{\int\chi_{\Gamma,s}\dd\lambda_{k,\G}-\int\chi_{\Gamma,s}\dd\lambda_{k,\G,t}}\overset{t,n\to\infty}{\longrightarrow}0
\end{align}
in probability for all $\Gamma\in\cP(\cG_{\cbc{0,1}})$ and $s\geq 0$.
Plugging in the definitions of $\lambda_{k,\G}$ and $\lambda_{k,\G,t}$ we obtain
\begin{align*}
 \Erw_{\G}\abs{\int\chi_{\Gamma,s}\dd\lambda_{k,\G}-\int\chi_{\Gamma,s}\dd\lambda_{k,\G,t}}&
 \leq\Erw_{\G}\brk{\frac{1}{n}\sum_{v\in [n]}\abs{\int\chi_{\Gamma,s}\dd\atom_{\brk{\G_v,v,\sigma_{k,\G_v}}} - 
 \int\chi_{\Gamma,s}\dd\atom_{\brk{\G_v,v,\mu_{\nix}(t|G_v)}} }}\\
  &=\Erw_{\G,\vv}\left[\abs{\chi_{\Gamma,s}([\G_{\vv},\vv,\sigma_{k,\G_{\vv}}])-\chi_{\Gamma,s}([\G_{\vv},\vv,\mu_{\nix}(t,\G_{\vv})])}\right].
\end{align*}

By the definition of $\chi_{\Gamma,s}$ we have
	\begin{align}\label{eqconv667}
	\Erw_{\G,\vv}\left[\abs{\chi_{\Gamma,s}([\G_{\vv},\vv,\sigma_{k,\G_{\vv}}])-\chi_{\Gamma,s}([\G_{\vv},\vv,\mu_{\nix}(t,\G_{\vv})])}\right]&
		\leq\pr\brk{\partial^s[\G_{\vv},\vv,\sigma_{k,\G_{\vv}}]\neq\partial^s[\G_{\vv},\vv,\mu_{\nix}(t,\G_{\vv})]}.
	\end{align}
To bound the last term, let $\delta>0$ and assume that $n>n_0(\delta)$, $t>t_0(\delta)$ are sufficiently large.
Let $\cI(\ell,\G)$ be the set of all vertices $v\in[n]$ such that the number of vertices $u$ at distance at most $s$ exceeds $\ell$.
\Thm~\ref{Thm_GW} implies that there exists $\ell_0=\ell_0(\delta,d)$ such that $\pr\brk{|\cI(\ell_0,\G)|>\delta n}\leq\delta$.
Further, let $\cJ(\G)$ be the set of all $v\in[n]\setminus \cI(\ell_0,\G)$ such that $\mu_u(t|\G)=1$ but $u\not\in\cC_k(\G)$ for some vertex $u$
at distance at most $s$ from $v$.
Then \Lem~\ref{Prop_bottomUp} implies that $\pr\brk{|\cJ(\G)|>\delta n}\leq\delta$, provided that $n_0,t_0$ are sufficiently large.
Hence,
	\begin{align}\nonumber
	\pr\brk{\partial^s[\G_{\vv},\vv,\sigma_{k,\G_{\vv}}]\neq\partial^s[\G_{\vv},\vv,\mu_{\nix}(t,\G_{\vv})]}&\leq
		\pr\brk{|\cI(\ell_0,\G)|>\delta n}+\pr\brk{|\cJ(\G)|>\delta n}+\\
			&\qquad\pr\brk{\vv\in\cI(\ell_0,\G)\cup\cJ(\G)|
			|\cI(\ell_0,\G)|\leq\delta n,|\cJ(\G)|\leq\delta n}\leq4\delta\label{eqconv668}
	\end{align}
and we obtain (\ref{convp}). Now, let $\eps>0$. Since $\cP(\cG_{\cbc{0,1}})$ is a Polish space, it holds that $\cP(\cG_{\cbc{0,1}})$ is metrizable. 
Using this and the fact that the functions $\chi_{\Gamma,s}$ generate the topology on $\cG_{\cbc{0,1}}$, (\ref{convp}) implies that for given $\delta=\delta(\eps)>0$ 
there exist $n>n_0(\delta)$, $t>t_0(\delta)$ such that the distance of $\lambda_{\G,k}$ and 
$\lambda_{\G,k,t}$ is less than $\delta$ with probability larger than $1-\eps$. Since $f$ is uniformly continuous this implies
	\begin{align}\label{eqconv669}
	\Erw_{\G}\abs{f(\lambda_{k,\G})-f(\lambda_{k,\G,t})}<\eps.
	\end{align}
for suitable $\delta>0.$

Combining \eqref{eqconv666} and \eqref{eqconv669} and the first part of the proof, i.e.\ that $\theta_{d,k}=\lim_{t\to\infty}\cL([\T_t(d,k)])$ exists, 
and invoking \Lem~\ref{Prop_WP}, we conclude that 
	$$\lim_{n\to\infty}\Lambda_{d,k,n}=\lim_{t\to\infty}\lim_{n\to\infty}\Lambda_{d,k,n,t}=\atom_{\theta_{d,k}},$$
as desired.
\end{proof}

\section{The Branching Process}
\label{Sec_Kathrin}

\noindent
In this section we prove that in the limit $t\to\infty$ the random $\{0,1\}$-marked tree $\T_t(d,k)$ converges to the
$\cbc{0,1}$-marked tree $\T(d,k,p^*)$ produced by the $5$-type branching process $\hat \T(d,k,p^*)$ from \Sec~\ref{Sec_results}.
Together with \Lem s~\ref{Prop_WP} and~\ref{Prop_bottomUp} this will imply \Thm s~\ref{Thm_main} and~\ref{Thm_lwc}.

\subsection{Truncating the tree}
We begin by characterising the limiting distribution of the first few generations of $\T_t(d,k)$ as $t\to\infty$.
More precisely,  in the tree $\T_t(d,k)$ the vertices are marked by $\mu_v(t|\T(d))$.
By construction, these marks can be deduced from the messages $\mu_{w\toboss}(t|\T(d))$, $w\in V(\T(d))$ (cf.~(\ref{marks})).
The key feature of the messages $\mu_{w\toboss}(t|\T(d))$ is that they are solely determined by the tree pending on $w$.
That is, in contrast to the marks $\mu_v(t|\T(d))$, the messages $\mu_{w\toboss}(t|\T(d))$ are independent of the part of $\T(d)$ ``above'' $w$
and are
therefore much more convenient to work with.
On the other hand they also contain all the necessary information to compute the Warning Propagation marks $\mu_v(t|\T(d))$ on $\T(d)$.
We shall therefore begin by determining the limit as $t\to\infty$ of the distributions
	$$\theta_{d,k,t}^{s}:=\cL(\partial^s[\T(d),v_0,\mu_{\nix\toboss}(t|\T(d))]).$$
In words, this is the distribution resulting from the following experiment:
	create a random tree $\T(d)$ and mark each vertex with the message $\mu_{v\toboss}(t|\T(d))$.
Then, truncate the tree by deleting all vertices at distance greater than $s$ from the root.

What might $\theta_{d,k,t}^{s}$ converge to as $t\to\infty$?
If we assume that the point-wise limit of the messages $\mu_{v\toboss}(t|\T(d))$ as $t\to\infty$ exists, then the limit of $\theta_{d,k,t}^{s}$ should admit the following
simple description:
Once  we condition on the isomorphism class $[\partial^s\T(d)]$ of the tree up to level $s$, the messages $\lim_{t\to\infty}\mu_{u\toboss}(t|\T(d))$ for vertices
$u$ at distance less than $s$ from the root $v_0$ are determined by the \emph{boundary messages} 
$\lim_{t\to\infty}\mu_{v\toboss}(t|\T(d))$ sent out by the vertices at distance precisely $s$ from $v_0$.
Furthermore, each of these is governed by the tree pending on $v$ only.
These trees are mutually independent copies of $\T(d)$.
Thus, \Lem~\ref{Lemma_as} suggests that the ``boundary messages'' converge to a sequence of mutually independent $\Be(p^*)$ variables.
Consequently, the heuristically conjectured limiting distribution is the one obtained by creating the first $s$ levels of a random tree $\T(d)$, marking
each vertex at distance precisely $s$ by an independent $\Be(p^*)$ ``message'', and passing the messages up to the root.

To define this distribution formally, let 
$T$ be a locally finite tree rooted at $v_0$ and let $s>0$ be an integer.
Moreover, let $\beta=(\beta_w)_{w\in V(T)}$ be a family of independent $\Be(p^*)$ random variables.
If either $t=0$ or $v$ has distance greater than $s$ from $v_0$, we define
	\begin{equation}\label{eqIni}
	\mu_{v\toboss}^*(t|T,s)=\beta_v.
	\end{equation}
Moreover, if $t\geq0$ and if $v$ has distance less than or equal to $s$ from $v_0$, let
	\begin{equation}\label{eqIt1}
	\mu_{v\toboss}^*(t+1|T,s)=\vecone\cbc{\sum_{w\in\partial_+ v}\mu_{w\toboss}^*(t|T,s)\geq k-1}.
	\end{equation}
Let us denote the map $v\mapsto\mu_{v\toboss}^*(t|T,s)$ by $\mu_{\nix\toboss}^*(t|T,s)$.
Finally, define
	$$\theta_{d,k}^{s,*}:=\cL\bc{\partial^s[\T(d),v_0,\mu_{\nix\toboss}^*(s|\T(d),s)]}.$$
We begin with the following simple observation.

\begin{lemma}\label{claim_5}
Assume that $0<s<r$. We have $\cL(\partial^s[\T(d),v_0,\mu_{\nix\toboss}^*(s|\T(d),s)])=\cL(\partial^s[\T(d),v_0,\mu_{\nix\toboss}^*(r|\T(d),s)])$.
\end{lemma}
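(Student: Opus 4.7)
The plan is to prove the stronger statement that the messages $\mu_{v\toboss}^*(t\,|\,\T(d),s)$ stabilise pointwise at every vertex of the truncated tree $\partial^s\T(d)$ by time $t=s$, so that the two random marked trees compared in the lemma are in fact equal as random objects, not merely equal in distribution.

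I would proceed by reverse induction on the distance $j$ from the root $v_0$, from $j=s$ down to $j=0$, and establish the statement: \emph{for every vertex $v$ at distance $j$ from $v_0$ and every $t\geq s-j$, the message $\mu_{v\toboss}^*(t\,|\,\T(d),s)$ already equals its eventual limit as $t\to\infty$.} The base case $j=s$ is immediate from the boundary prescription~(\ref{eqIni}), which pins the messages of the distance-$s$ vertices to the static values $\beta_v$. For the inductive step, the recursion~(\ref{eqIt1}) expresses $\mu_{v\toboss}^*(t+1\,|\,\T(d),s)$ at a vertex $v$ of distance $j<s$ as a function solely of the child messages $\mu_{w\toboss}^*(t\,|\,\T(d),s)$ with $w\in\partial_+v$, each at distance $j+1$; by the inductive hypothesis these child messages have already attained their limits once $t\geq s-(j+1)$, so the value at $v$ has attained its limit by time $t=s-j$. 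Iterating down to $j=0$ yields that every message at every vertex of $\partial^s\T(d)$ is stable from time $t=s$ onwards.

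From this stabilisation, the lemma follows by a coupling argument: for any $r>s$, the maps $v\mapsto\mu_{v\toboss}^*(s\,|\,\T(d),s)$ and $v\mapsto\mu_{v\toboss}^*(r\,|\,\T(d),s)$ agree pointwise on the vertex set of $\partial^s\T(d)$, so the random marked rooted trees $\partial^s[\T(d),v_0,\mu_{\nix\toboss}^*(s\,|\,\T(d),s)]$ and $\partial^s[\T(d),v_0,\mu_{\nix\toboss}^*(r\,|\,\T(d),s)]$ coincide as random variables on the common probability space of $\T(d)$ and $(\beta_w)_{w\in V(\T(d))}$, which gives equality of laws. The argument is essentially bookkeeping and no delicate probabilistic step is involved; the only item requiring a moment's thought is ensuring that the base case of the induction is genuinely clean, i.e.\ that the interplay of~(\ref{eqIni}) and~(\ref{eqIt1}) does not delay the freezing of distance-$s$ messages beyond the first update of their parents, as this would shift the whole stabilisation schedule by one and invalidate the bound $t=s$.
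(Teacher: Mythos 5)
Your induction is the right argument, and it is essentially what the paper's one\nobreakdash-line proof (``immediate from the construction'') leaves implicit: the messages on $\partial^s\T(d)$ freeze pointwise by a fixed time, so the two marked trees agree almost surely on a common probability space, not merely in law.

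However, the point you flag as ``requiring a moment's thought'' is genuinely the crux, and as the definitions are literally written your base case does not hold. Equation (\ref{eqIni}) pins $\mu^*_{v\toboss}(t|T,s)=\beta_v$ only for vertices at distance \emph{greater than} $s$ (or at $t=0$), while (\ref{eqIt1}) applies the update rule to every vertex at distance \emph{at most} $s$ --- including those at distance exactly $s$. Under that literal reading a distance-$s$ vertex $v$ switches at $t=1$ from $\beta_v$ to $\vecone\cbc{\sum_{w\in\partial_+v}\beta_w\geq k-1}$, the whole stabilisation schedule shifts by one, and the root freezes only at time $s+1$. Worse, the conclusion then fails even in distribution: already for $s=1$, at time $t=1$ the root's mark $\vecone\cbc{\sum_{w\in\partial_+v_0}\beta_w\geq k-1}$ is (conditionally on the tree) independent of its children's marks, whereas at $t=2$ it is a deterministic function of them; so the event that the root is marked $1$ while all its children are marked $0$ has positive probability at $t=1$ but probability zero at $t=2$. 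The lemma is correct only under the reading --- which the paper clearly intends and uses elsewhere, e.g.\ in the proof of Lemma~\ref{Lemma_sizeless}, where $\mu^*_{v\toboss}(t|\T(d),s)=\beta_v$ is invoked for all $t$ and all $v$ at distance precisely $s$ --- that vertices at distance at least $s$ retain the value $\beta_v$ for all $t$ and only vertices at distance strictly less than $s$ are updated. You should make that reading explicit; with it your base case holds, every vertex at distance $j\le s$ is frozen from time $s-j$ onwards, and the rest of your argument goes through verbatim.
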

\begin{proof}
The assertion is immediate from the construction of the messages.
\end{proof}

\noindent
The main result of this subsection is

\begin{lemma}\label{Lemma_sizeless}
We have $\lim_{t\to\infty}\theta_{d,k,t}^{s}=\theta_{d,k}^{s,*}$ for all $s\geq 0$.
\end{lemma}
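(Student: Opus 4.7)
The plan is to reduce the claim to joint almost-sure convergence of the Warning Propagation messages on $\T(d)$, truncated at depth $s$. The space of isomorphism classes of finite rooted $\{0,1\}$-marked trees of depth at most $s$ is countable, and both measures are supported there, so it suffices to show that for every such class $[\tau]$,
\[
\pr\brk{\partial^s[\T(d),v_0,\mu_{\nix\toboss}(t|\T(d))]=[\tau]}\;\longrightarrow\;\pr\brk{\partial^s[\T(d),v_0,\mu_{\nix\toboss}^*(s|\T(d),s)]=[\tau]}
\]
as $t\to\infty$.

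First I would condition on the unmarked tree $\partial^s\T(d)$. By the recursive nature of the Galton--Watson tree, the subtrees $(\T_v)_{v\in\cB(s)}$ hanging below the vertices at distance exactly $s$ are, conditionally on $\partial^s\T(d)$, mutually independent copies of $\T(d)$. Because the message $\mu_{v\toboss}(t|\T(d))$ depends only on the subtree rooted at $v$, Lemma~\ref{Lemma_as} applied separately to each such $\T_v$ yields almost sure limits $\mu^*(\T_v):=\lim_{t\to\infty}\mu_{v\toboss}(t|\T_v)\in\{0,1\}$ with $\Be(p^*)$ marginals; independence of the subtrees upgrades this to joint a.s.\ convergence of $(\mu_{v\toboss}(t|\T(d)))_{v\in\cB(s)}$ to a family of i.i.d.\ $\Be(p^*)$ random variables. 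For a vertex $v$ at depth $i<s$, the recursion~(\ref{messages}) unrolled $s-i$ times realises $\mu_{v\toboss}(t|\T(d))$ (for $t\geq s-i$) as a fixed deterministic nested-WP function of the depth-$s$ messages at time $t-(s-i)$; letting $t\to\infty$ and combining with the previous step, we obtain joint a.s.\ convergence of the full vector $(\mu_{v\toboss}(t|\T(d)))_{\mathrm{dist}(v,v_0)\leq s}$ to the vector produced by putting i.i.d.\ $\Be(p^*)$ marks on $\cB(s)$ and propagating upward via the WP rule. Almost sure convergence of a discrete-valued random vector implies convergence in distribution.

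The remaining task is to identify this limiting conditional law with that of $\mu_{\nix\toboss}^*(s|\T(d),s)$ restricted to depth $\leq s$, conditional on $\partial^s\T(d)$. Starting from~(\ref{eqIni})--(\ref{eqIt1}) and invoking Lemma~\ref{claim_5} to push the effective boundary below $\cB(s)$, the marks on vertices at distance $\leq s$ from $v_0$ are obtained by applying nested WP rules to i.i.d.\ $\Be(p^*)$ boundary values; the fixed-point identity $\phi_{d,k}(p^*)=p^*$ from Lemma~\ref{Prop_fix} then guarantees that the induced marks at $\cB(s)$ are themselves i.i.d.\ $\Be(p^*)$ and that their upward propagation coincides with the WP rule. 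Integrating the resulting conditional identity against the law of $\partial^s\T(d)$ finishes the proof. The delicate point, and the main obstacle, is this last matching step: one must combine Lemma~\ref{claim_5} with the fixed-point property carefully, since a direct reading of~(\ref{eqIt1}) assigns different ``effective boundaries'' to vertices at different depths, and only after invoking~\ref{claim_5} does the construction reduce to the clean ``i.i.d.\ boundary on $\cB(s)$, propagate upward'' description matching the WP limit.
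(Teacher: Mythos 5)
Your argument is correct and follows essentially the same route as the paper's: both rest on applying \Lem~\ref{Lemma_as} to the i.i.d.\ subtrees pending below level $s$ (conditioned on $\partial^s\T(d)$) and on the fact that the messages above level $s$ are deterministic WP-functions of the boundary messages; the paper packages this as an explicit coupling that makes the two marked trees coincide with probability $1-\eps$, while you phrase it as conditional joint almost-sure convergence followed by identification of the limit law. Your explicit treatment of the ``effective boundary'' mismatch in the definition~(\ref{eqIni})--(\ref{eqIt1}) via \Lem~\ref{claim_5} and the fixed-point identity is sound and in fact slightly more careful than the paper's on that point.
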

\begin{proof}
Let $0<\eps<1/10$.
We couple $\partial^s[\T(d),v_0,\mu_{\nix\toboss}(t|\T(d))]$ and $\partial^s[\T(d),v_0,\mu_{\nix\toboss}^*(s|\T(d),s)]$ such that both
operate on the same tree $\T(d)$.
Let $\cB$ be the set of all vertices of the random tree $\T(d)$ that have distance precisely $s$ from $v_0$.
Because $\Erw_{\T(d)}[|\cB|]$ is bounded, 
there exists $C=C(d,k,\eps)>0$ such that 
	\begin{equation}\label{eqsizeless1}
	\pr\brk{|\cB|\leq C}>1-\eps/2.
	\end{equation}
To prove the assertion, we are going to show that conditioned on $|\cB|\leq C$ there exists $t_0=t_0(\eps)$ such that for all $t>t_0$
there is a coupling of $\partial^s[\T(d),v_0,\mu_{\nix\toboss}(t|\T(d))]$ and $\partial^s[\T(d),v_0,\mu_{\nix\toboss}^*(s|\T(d),s)]$
such that both coincide with probability at least $1-\eps/2$.

Let $\beta=(\beta_v)_{v\in\cB}$ be a family of mutually independent $\Be(p^*)$ random variables.
Given the sub-tree $\partial^s[\T(d),v_0]$, the trees $\T_v$ pending on the vertices $v\in\cB$
are mutually independent and have the same distribution as the tree $\T(d)$ itself.
Therefore, \Lem~\ref{Lemma_as} implies that given $|\cB|\leq C$ there exist $t_1=t_1(d,k,\eps)$ 
and a coupling of $\beta$ with the trees $(\T_v)_{v\in\cB}$ such that
	\begin{equation}\label{eqsizeless2}
	\pr\brk{\mu_{v\toboss}(t|\T(d))=\beta_v	\forall t>t_1,v\in\cB	\; {\big |} \;|\cB|\leq C}>1-\eps/2.
	\end{equation}
Consider the event $\cE=\cbc{\mu_{v\toboss}(t|\T(d))=\beta_v \forall t>t_1,v\in\cB}$.

If the event $\cE$ occurs, then the initialisation $\mu_{v\toboss}^*(t|\T(d),s)=\beta_v$ for $v\in\cB$ (cf.\ (\ref{eqIni})),
and (\ref{eqIt1}) ensure that
	$$\mu_{u\toboss}^*(s|\T(d),s)=\mu_{u\toboss}^*(t|\T(d),s)=\mu_{u\toboss}(t|\T(d))\qquad\mbox{for all $t>t_1+s$}.$$
Hence, (\ref{eqsizeless1}) and (\ref{eqsizeless2}) yield
	$$\pr\brk{\partial^s[\T(d),v_0,\mu_{\nix\toboss}(t|\T(d))]=\partial^s[\T(d),v_0,\mu_{\nix\toboss}^*(s|\T(d),s)]}>1-\eps,$$
as desired.
\end{proof}

\subsection{Turning the tables}
The distribution $\theta_{d,k}^{s,*}$ describes the \emph{bottom-up} process of creating a random tree $\partial^s\T(d)$ to generation $s$,
generating a random boundary condition, and passing the messages up from the boundary to the root.
By contrast, the branching process from \Sec~\ref{Sec_results} proceeds in a \emph{top-down} fashion:
	the marks are created simultaneously with the tree.
We now construct a top-down process that produces the distribution $\theta_{d,k}^{s,*}$.

More precisely, define a random $\cbc{0,1}$-marked tree $\T^*(d,k)$ by means of the following two-type branching process (the type of a vertex $v$ will correspond to the message that $v$ passes to its parent).
Initially, there is a root vertex $v_0$ that has type $1$ with probability $p^*$ and type $0$ with probability $1-p^*$.
The offspring of a type $0$ vertex consist of $\Po(d(1-p^*))$ type $0$ vertices and independently $\Po_{<k-1}(dp^*)$ type $1$ vertices.
Further, a type $1$ vertex spawns $\Po(d(1-p^*))$ type $0$ offspring and independently $\Po_{\geq k-1}(d p^*)$ type $1$ offspring.
The mark of each vertex $v$, denoted by $\mu_{v\toboss}^*$, is identical to its type.
\begin{lemma}\label{Lemma_topdown}
For any $s\geq0$ we have $\cL\bc{\partial^s[\T^*(d,k)]}=\theta_{d,k}^{s,*}$.
\end{lemma}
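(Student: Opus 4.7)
The plan is to prove the identity by induction on $s$, strengthened so as to handle both root marks simultaneously. Concretely, I will show that for every $s \geq 0$ and every $a \in \{0,1\}$, the law of $\partial^s[\T^*(d,k)]$ conditioned on the root having type $a$ agrees with the law of $\partial^s[\T(d),v_0,\mu^*_{\nix\toboss}(s|\T(d),s)]$ conditioned on $\mu^*_{v_0\toboss}(s|\T(d),s) = a$. Given this stronger claim, the lemma follows by mixing over $a \in \{0,1\}$: in $\T^*(d,k)$ the root has type $1$ with probability $p^*$ by definition, while in the bottom-up construction the marginal probability that the root receives mark $1$ equals $\pr[\Po(dp^*) \geq k-1] = \phi_{d,k}(p^*) = p^*$ by \Lem~\ref{Prop_fix}. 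The base case $s = 0$ is immediate, as both sides describe a single isolated vertex carrying the prescribed mark $a$.

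For the inductive step I analyse the top level of the bottom-up process. The boundary values $(\beta_v)$ attached to descendants of distinct children of $v_0$ are independent, and therefore the messages $(\mu^*_{w\toboss}(s|\T(d),s))_{w \in \partial_+ v_0}$ are mutually independent conditional on the tree structure. A short induction on the depth of $w$ (in the style of \Lem~\ref{Lemma_as}, but now exact because $p^*$ is a \emph{fixed} point of $\phi_{d,k}$ rather than merely a limit) shows that each such message has marginal distribution $\Be(p^*)$. Since $|\partial_+ v_0| \sim \Po(d)$, Poisson splitting yields that the number of type-$1$ children of $v_0$ is $\Po(dp^*)$ and, independently, the number of type-$0$ children is $\Po(d(1-p^*))$. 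Conditioning on $\mu^*_{v_0\toboss}(s|\T(d),s) = 1$, i.e.\ on at least $k-1$ of the children being of type $1$, turns the type-$1$ count into $\Po_{\geq k-1}(dp^*)$ while leaving the type-$0$ count intact; conditioning on mark $0$ produces the complementary pair $(\Po_{<k-1}(dp^*), \Po(d(1-p^*)))$. These match precisely the offspring laws of type-$1$ and type-$0$ vertices of $\T^*(d,k)$.

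Finally, conditional on the children of $v_0$ together with their assigned types, the subtrees rooted at distinct children are mutually independent, and for each child $w$ of type $a_w$ the subtree of $\T(d)$ below $w$ equipped with its boundary values is a copy of the bottom-up experiment one level shallower, conditioned on the root message equalling $a_w$; the formal identification of this sub-experiment with the shifted one (parameter $s-1$ in place of $s$) is immediate from (\ref{eqIni})--(\ref{eqIt1}) and \Lem~\ref{claim_5}. By the inductive hypothesis the depth-$(s-1)$ truncation of each such subtree has the same law as $\partial^{s-1}[\T^*(d,k)]$ conditioned on root type $a_w$, so reassembling the root with its children and the subtrees hanging from them reproduces exactly the recursive definition of $\T^*(d,k)$. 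The main (minor) obstacle is thus not any hard estimate but the bookkeeping required to combine the three ingredients, namely independence of subtrees, Poisson splitting, and the fixed-point marginal $\Be(p^*)$, under the correct conditioning on the root mark.
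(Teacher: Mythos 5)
Your proof is correct, and the central computation is the same one the paper uses: Poisson thinning of the $\Po(d)$ offspring into independent $\Po(dp^*)$ type-$1$ and $\Po(d(1-p^*))$ type-$0$ counts, followed by conditioning on the parent's message to obtain $\Po_{\geq k-1}(dp^*)$ resp.\ $\Po_{<k-1}(dp^*)$, with the fixed-point identity $\phi_{d,k}(p^*)=p^*$ guaranteeing that the boundary condition propagates exactly. However, your induction is organised differently from the paper's. The paper conditions on the entire marked tree $\partial^s[\T(d),v_0,\mu^*_{\nix\toboss}(s+1|\T(d),s+1)]$ and \emph{extends one generation at the bottom}, checking that the level-$(s+1)$ offspring counts $X_u(z)$ of the level-$s$ vertices have the right conditional laws; \Lem~\ref{claim_5} is then used once, to shift the boundary parameter from $s+1$ back to $s$. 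You instead \emph{peel off the root} and recurse into the children's subtrees, which forces you to strengthen the induction hypothesis by conditioning on the root's mark and to identify each subtree experiment with the experiment of parameter $s-1$. Both schemes work; the paper's avoids the strengthened hypothesis, while yours exploits the self-similarity of $\T(d)$ more directly. One point where your bookkeeping genuinely bites and deserves a sentence: in the marked tree $\partial^s[\T(d),v_0,\mu^*_{\nix\toboss}(s|\T(d),s)]$ the root's mark is computed from the children's messages at time $s-1$, whereas the children's \emph{marks} in that tree are their messages at time $s$, and these need not coincide pointwise (the depth-$1$ messages only stabilise from time $s$ on). So "at least $k-1$ of the children being of type $1$" is not literally an event about the children's displayed marks at time $s$. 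The clean fix is exactly the one you gesture at: first replace time $s$ by a stabilised time $r>s$ via \Lem~\ref{claim_5}, after which the root's mark is a function of the children's marks and your decomposition into root, offspring counts, and conditionally independent subtrees goes through verbatim.
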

\begin{proof}
Let us introduce the shorthands
	$$\cT(s,r)=\partial^{s}[\T(d),v_0,\mu_{\nix\toboss}^*(r|\T(d),s)],\qquad\cT(s)=\partial^s[\T^*(d,k)],$$
so our aim is to prove that $\cL(\cT(s,s))=\cL(\cT(s))$ for all $s$.
The proof is by induction on $s$.
In the case $s=0$ both $\cT(s)$ and $\cT(s,s)$ consist of the root $v_0$ only,
which is marked $1$ with probability $p^*$ and $0$ otherwise.

Now, assume that $\cL(\cT(s))=\cL(\cT(s,s))$.
To proceed to $s+1$, 
recall that 
the distribution $\cL(\cT(s+1,s+1))$ can be described as follows.
Create the random tree $\T(d)$ and let $\cB(r)$ be the set of vertices at distance precisely $r$ from the root for $r\geq0$.
Further, let $\beta_v=\mu_{v\toboss}^*(0|\T(d),s+1)$ for $v\in\cB(s+1)$.
Then $(\beta_v)_{v\in\cB(s+1)}$ is a family of independent $\Be(p^*)$ variables.
In addition, let $X_u(z)$ be the number of children $v$ of $u\in\cB(s)$ such that $\beta_v=z$.
Clearly, in the random tree $\T(d)$ the total number of children of $u\in\cB(s)$ has distribution $\Po(d)$, and these numbers are mutually independent
	conditioned on $\partial^s\T(d)$.
Since for each child $v$ we have $\beta_v=1$ with probability $p^*$ independently, we see that conditioned on $\partial^s\T(d)$ the random variables
	$(X_u(z))_{u\in\cB(s),z\in\cbc{0,1}}$ are mutually independent.
Moreover, $X_u(z)$ has distribution $\Po(dp^*)$ if $z=1$ and distribution $\Po(d(1-p^*))$ if $z=0$.
Further, $\mu_{u\toboss}^*(s+1|\T(d),s+1)=1$ iff $X_u(1)\geq k-1$.

Hence, the distribution of $\cT(s+1,s+1)$ conditioned on $\cT(s,s+1)$ can be described as follows.
Conditioned on $\cT(s,s+1)$, the random variables $(X_u(z))_{u\in\cB(s),z\in\cbc{0,1}}$ are mutually independent.
Furthermore, conditioned on $\mu_{u\toboss}^*(s+1|\T(d),s+1)=1$, $X_u(1)$ has distribution $\Po_{\geq k-1}(dp^*)$.
By contrast, given $\mu_{u\toboss}^*(s+1|\T(d),s+1)=0$, $X_u(1)$ has distribution $\Po_{<k-1}(dp^*)$.
In addition, $X_u(0)$ has distribution $\Po(d(1-p^*))$ for any $u$.
Therefore, the distribution of the random variables $(X_u(z))_{u\in\cB(s),z\in\cbc{0,1}}$ conditioned on $\cT(s,s+1)$ coincides
with the offspring distribution of the tree $\T^*(d,k)$.
Since $\cL(\cT(s,s+1))=\cL(\cT(s,s))=\cL(\cT(s))$ by \Lem~\ref{claim_5} and induction, the assertion follows.
\end{proof}

\subsection{Exchanging messages both ways}
\Lem s~\ref{Lemma_sizeless} and~\ref{Lemma_topdown} show that the labels $\mu_{v\toboss}^*$ of $\T^*(d,k)$ correspond to the
 ``upward messages'' that are sent toward the root in the tree $\T(d)$.
Of course, in the tree $\T(d)$ the marks $\mu_v(t|\T(d))$ 
can be computed from the messages $\mu_{v\toboss}(t|\T(d))$.
Indeed, for the root $v_0$ we simply have
	$$\mu_{v_0}(t|\T(d))=\vecone\cbc{\sum_{w\in\partial v_0}\mu_{w\to v_0}(t|\T(d))}.$$
However, for vertices $v\neq v_0$ there is a twist.
Namely, $\mu_v(t|\T(d))$ depends not only on the messages that $v$ receives from its children, but also on the message that its parent $u$ sends to $v$.
This message, in turn, depends on the message that $u$ receives from its parent, etc.\ up to the root.
Thus, we need to get a handle on the ``top-down'' message 
that $v$ receives from its parent.
These can be described recursively by letting
	\begin{equation}\label{eqTopDown1}
	\mu_{\fromboss v_0}(t|\T(d))=0   \qquad \mbox{for all }t\geq0,  
	\end{equation}
and for a vertex $v\neq v_0$ with parent $u$ we define
	\begin{align}\label{eqTopDown2}
	\mu_{\fromboss v}(0|\T(d))&=\mu_{u\to v}(0|\T(d)) =1 \nonumber\\
	\mu_{\fromboss v}(t+1|\T(d))&=\mu_{u\to v}(t+1|\T(d))
		=	\vecone\cbc{\mu_{\fromboss u }(t|\T(d))+
					\sum_{w\in\partial_+ u\setminus v}\mu_{w\toboss}(t|\T(d))\geq k-1}, 
	\end{align}
where, as we recall, $\partial_+u$ is the set of children of $u$.
 Then
	\begin{align*}
	\mu_{v}(t|\T(d))&=\vecone\cbc{\mu_{\fromboss v}(t|\T(d))+\sum_{w\in\partial_+v}\mu_{w\toboss}(t|\T(d))\geq k}.
	\end{align*}
Let $\hat\T_t(d,k)$ signify the random $\labelset$-marked rooted tree obtained by marking each vertex of  $\T(d)$ with the triple
$(\mu_v(t|\T(d)),\mu_{v\toboss}(t|\T(d)),\mu_{\fromboss v}(t|\T(d)))$.

Our ultimate interest is in the marks $\mu_{v}(t|\T(d))$.
To get a handle on these, we are going to mimic the construction of the ``top-down'' messages
on the random tree $\T^*(d,k)$.
Of course, we set $\mu_{\fromboss v_0}^*=0$ 
and
$$\mu_{v_0}^*=\vecone\cbc{\sum_{w\in\partial v_0}\mu_{w\toboss}^*\geq k}.$$
Further, assume that $\mu_{\fromboss u}^*$ has been defined already and that $u$ is the parent of some vertex $v\neq v_0$.
Then we let 
	\begin{align}\label{eqKathrin1}
	\mu_{\fromboss v}^*&=\vecone\cbc{\mu_{\fromboss u}^*+\sum_{w\in\partial^+ u\setminus v}\mu_{w\toboss}^*\geq k-1},\\
	\mu_v^*&=\vecone\cbc{\mu_{\fromboss v}^*+\sum_{w\in\partial_+ v}\mu_{w\toboss}^*\geq k}.
                         \label{eqKathrin2}
	\end{align}
Let $\hat\T^*(d,k)$ signify the resulting tree in which each vertex is marked by the triple
	$(\mu_v^*,\mu_{v\toboss}^*,\mu_{\fromboss v}^*)$.
It is immediate from the construction that
	$$(\mu_v^*,\mu_{v\toboss}^*,\mu_{\fromboss v}^*)\in\labelset$$
for all $v$.

\begin{lemma}\label{Lemma_decorated}
For any $s>0$ we have \ $\lim_{t\to\infty}\cL(\partial^s[\hat\T_t(d,k)])=\cL(\partial^s[\hat\T^*(d,k)])$.
\end{lemma}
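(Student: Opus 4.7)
The plan is to reduce this statement to the already-established \Lem s~\ref{Lemma_sizeless} and~\ref{Lemma_topdown} by exploiting that the recursions defining the top-down message $\mu_{\fromboss v}$ and the vertex mark $\mu_v$---both on $\T(d)$ (equations~\eqref{eqTopDown1}--\eqref{eqTopDown2}) and on $\T^*(d,k)$ (equations~\eqref{eqKathrin1}--\eqref{eqKathrin2})---are word-for-word identical. Thus on both trees the $\labelset$-marks at depth $\leq s$ are computed by a single deterministic local map from the tree structure and the upward messages $\mu_{\nix\toboss}$ at depth $\leq s+1$.

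Concretely, I would define a map $F_s$ on finite rooted $\{0,1\}$-marked trees truncated at depth $s+1$ (the marks interpreted as upward messages) as follows. Set $\mu_{\fromboss v_0}=0$ at the root; for each $u$ at depth $<s$ and each child $v$ of $u$ put $\mu_{\fromboss v}=\vecone\cbc{\mu_{\fromboss u}+\sum_{w\in\partial_+ u\setminus v}\mu_{w\toboss}\geq k-1}$; finally at each $v$ of depth $\leq s$ put $\mu_v=\vecone\cbc{\mu_{\fromboss v}+\sum_{w\in\partial_+ v}\mu_{w\toboss}\geq k}$ and mark $v$ by $(\mu_v,\mu_{v\toboss},\mu_{\fromboss v})$, truncating the result to depth $s$. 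By direct inspection of~\eqref{eqKathrin1}--\eqref{eqKathrin2}, almost surely $\partial^s[\hat\T^*(d,k)]=F_s\bc{\partial^{s+1}[\T^*(d,k)]}$.

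The main obstacle is that on $\T(d)$ the finite-time recursion~\eqref{eqTopDown2} uses upward messages $\mu_{w\toboss}(t-1|\T(d))$ at the previous time step rather than the stabilised value. This is resolved using \Lem~\ref{Lemma_WP}: upward messages are monotone in $t$ and $\{0,1\}$-valued, hence eventually constant at each vertex. Since $\T(d)$ almost surely has only finitely many vertices within depth $s+1$, for any $\eps>0$ there is $t_0=t_0(s,\eps)$ such that with probability at least $1-\eps$ all upward messages at depth $\leq s+1$ have stabilised by time $t_0$. On this event, for $t\geq t_0+s$, iterating~\eqref{eqTopDown1}--\eqref{eqTopDown2} shows that the downward messages and vertex marks at depth $\leq s$ at time $t$ coincide with the output of $F_s$ on the (now time-independent) upward messages, so $\partial^s[\hat\T_t(d,k)]=F_s\bc{\partial^{s+1}[\T(d),v_0,\mu_{\nix\toboss}(t|\T(d))]}$ on this event.

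To conclude, \Lem s~\ref{Lemma_sizeless} and~\ref{Lemma_topdown} applied at level $s+1$ give
\[
\lim_{t\to\infty}\cL\bc{\partial^{s+1}[\T(d),v_0,\mu_{\nix\toboss}(t|\T(d))]}=\cL\bc{\partial^{s+1}[\T^*(d,k)]}.
\]
Since $F_s$ is locally constant on the discrete space of finite $\{0,1\}$-marked trees of depth $s+1$, pushing forward by $F_s$ preserves weak convergence, and the two identifications above yield $\lim_{t\to\infty}\cL(\partial^s[\hat\T_t(d,k)])=\cL(\partial^s[\hat\T^*(d,k)])$, as desired.
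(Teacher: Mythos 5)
Your proposal is correct and follows essentially the same route as the paper, which simply declares the lemma immediate from Lemmas~\ref{Lemma_sizeless} and~\ref{Lemma_topdown} together with the fact that the defining recursions \eqref{eqTopDown1}--\eqref{eqTopDown2} and \eqref{eqKathrin1}--\eqref{eqKathrin2} match. Your write-up merely makes explicit the details the paper leaves implicit: the deterministic local map $F_s$ from depth-$(s+1)$ upward-message configurations to depth-$s$ five-type marks, and the stabilisation argument handling the time lag in the finite-$t$ recursion.
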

\begin{proof}
This is immediate from \Lem~\ref{Lemma_sizeless}, \Lem~\ref{Lemma_topdown} and the fact that
the definitions (\ref{eqTopDown1})--(\ref{eqTopDown2}) and (\ref{eqKathrin1}) 
of the ``top-down'' messages for $\hat\T_t(d,k)$ and $\hat\T^*(d,k)$ match.
\end{proof}

\subsection{Assembling the pieces}
Finally, we make the connection to the branching process from \Sec~\ref{Sec_results}.
Recall that $\hat\T(d,k,p^*)$ is the random $\cbc{0,1}^3$-marked tree produced by the $5$-type branching process with offspring distributions as in Figure~\ref{Fig_g}.

\begin{lemma}\label{Lem_id}
We have $\cL([\hat\T^*(d,k)])=\cL([\hat\T(d,k,p^*)])$.
\end{lemma}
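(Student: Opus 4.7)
The plan is to verify that the distributions match by checking (i) the distribution of the root's triple of marks, and (ii) the conditional distribution of each vertex's children given its own triple, since both $\hat\T^*(d,k)$ and $\hat\T(d,k,p^*)$ are Markovian top-down processes on the vertex's ``type'' (with the type of a vertex in $\hat\T^*(d,k)$ being the triple $(\mu_v^*,\mu_{v\toboss}^*,\mu_{\fromboss v}^*)$). First I would observe that by construction, for every vertex $v$ of $\hat\T^*(d,k)$ the triple $(\mu_v^*,\mu_{v\toboss}^*,\mu_{\fromboss v}^*)$ belongs to $\labelset$, and that at the root one automatically has $\mu_{\fromboss v_0}^*=0$, ruling out types $001$ and $111$. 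A short calculation using that, conditionally on $v_0$ being of type $1$ in the two-type process $\T^*(d,k)$, the number of type-$1$ children is $\Po_{\geq k-1}(dp^*)$, gives
\begin{align*}
\pr[v_0\text{ has triple }000]&=1-p^*,&\pr[v_0\text{ has triple }010]&=p^*q,&\pr[v_0\text{ has triple }110]&=p^*(1-q),
\end{align*}
which exactly matches the initial distribution of $\hat\T(d,k,p^*)$.

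Next I would work through the five offspring distributions. The key observation is that the second coordinate $z_2$ of a vertex's triple is its type in the underlying two-type process and therefore determines the joint distribution of the numbers of type-$0$ and type-$1$ children in $\T^*(d,k)$: $\Po(d(1-p^*))$ type-$0$ offspring, and an independent $\Po_{<k-1}(dp^*)$ (if $z_2=0$) or $\Po_{\geq k-1}(dp^*)$ (if $z_2=1$) number of type-$1$ offspring. The first coordinate $z_1$ imposes an additional constraint on the number $h$ of type-$1$ children of $v$ through (\ref{eqKathrin2}), using that the incoming top-down mark is $z_3$. One then reads off the offspring triples from the rules (\ref{eqKathrin1})--(\ref{eqKathrin2}) applied to each child $w$: the third coordinate of $w$'s triple is $\mathbf 1\{z_3+h-\mu_{w\toboss}^*\geq k-1\}$ and the first coordinate is determined by whether the number of $w$'s type-$1$ children reaches the threshold (with a further split according to $q$ or $1-q$). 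For the four types $000$, $010$, $110$, $111$ a direct case analysis yields exactly the monomials appearing in the corresponding generating functions of Figure~\ref{Fig_g}.

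The main obstacle is type $001$. Here the vertex $v$ has $z_2=0$, hence $h\sim\Po_{<k-1}(dp^*)$, but the top-down message it received is $z_3=1$. By (\ref{eqKathrin1}), the top-down mark that $v$ passes to a \emph{type-$0$} child is $\mathbf 1\{1+h\geq k-1\}=\mathbf 1\{h\geq k-2\}$, whereas it passes $0$ to any type-$1$ child regardless of $h$. Therefore the type of each type-$0$ child is $000$ if $h<k-2$ but $001$ if $h=k-2$; this forces the splitting of the generating function according to whether $h$ is exactly $k-2$ or strictly smaller. Since $\bar q$ is defined as $\pr[\Po(dp^*)=k-2\mid\Po(dp^*)\le k-2]$, the event $\{h=k-2\}$ has probability exactly $\bar q$ under $\Po_{<k-1}(dp^*)$, and conditioning on $\{h\leq k-3\}$ produces $\Po_{\le k-3}(dp^*)$; multiplying the two resulting generating functions by $\bar q$ and $1-\bar q$ respectively reproduces $g_{001}$.

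Having checked the root distribution and all five offspring distributions, the Markov property of multi-type branching processes implies that the laws of $\hat\T^*(d,k)$ and $\hat\T(d,k,p^*)$ agree on every finite truncation $\partial^s$. Since the topology on $\cG_{\cbc{0,1}^3}$ is generated by the maps $\chi_{\Gamma,s}$, this equality of all finite-depth marginals yields the desired identity of laws on the whole tree.
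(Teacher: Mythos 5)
Your proposal is correct and follows essentially the same route as the paper's proof: verify the root's triple distribution, then check the conditional offspring distribution for each of the five types against the generating functions of Figure~\ref{Fig_g} (including the $\bar q$ split for type $001$ according to whether the number of type-$1$ children equals $k-2$), and conclude by agreement of all finite-depth marginals. The paper merely formalises your appeal to the Markov property via the filtration $\cF_s\subset\hat\cF_s\subset\cF_{s+1}$, which makes precise that conditioning on a vertex's triple (whose first coordinate depends on the next generation) still leaves the offspring triples independent with the stated laws.
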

\begin{proof}
It suffices to show that $\cL(\partial^s[\hat\T^*(d,k)])=\cL(\partial^s[\hat\T(d,k,p^*)])$ for any $s\geq0$.
The proof of this is by induction on $s$. 
Let $\cF_s$ be the $\sigma$-algebra generated by the $\cbc{0,1}$-marked tree $\partial^s \T^*(d,k)$.
That is, $\cF_s$ mirrors the information contained in the first $s$ generations of $\T^*(d,k)$, including the marks.
In addition, let $\hat\cF_s$ be the $\sigma$-algebra generated by $\partial^s\hat\T^*(d,k)$.
Then the construction of $\hat\T^*(d,k)$ ensures that
	\begin{equation}\label{Fact_measurable}
	\cF_s\subset\hat\cF_s\subset\cF_{s+1}\qquad\mbox{for any $s\geq0$.}
	\end{equation}

With respect to $s=0$, we see that $\mu_{\fromboss v_0}^*=0$ with certainty.
Moreover, $\mu_{v_0\toboss}^*$ has distribution $\Be(p^*)$ and $\mu_{v_0}^*=0$ if $\mu_{v_0\toboss}^*=0$.
On the other hand, conditioned on that $\mu_{v_0\toboss}^*=1$, 
the number $\sum_{w\in\partial v_0}\mu_{w\toboss}^*$ of children $w$ of $v_0$ in $\T^*(d,k)$  with $\mu_{w\toboss}^*=1$
has distribution $\Po_{\geq k-1}(dp^*)$, and $\mu_{v_0}^*=1$ iff $\sum_{w\in\partial v_0}\mu_{w\toboss}^*\geq k$.
Hence, using the fixed point property $p^*=\pr\br{\Po(dp^*)\geq k-1}$ and~\eqref{type_prob} with $q=q(d,k,p^*)$, we obtain
\begin{align*}
 \pr\br{(\mu_{v_0}^*,\mu_{v_0\toboss}^*,\mu_{\fromboss v_0}^*)=000}&=\pr\br{\Po(dp^*)<k-1}=1-p^*=p_{000},\\
 \pr\br{(\mu_{v_0}^*,\mu_{v_0\toboss}^*,\mu_{\fromboss v_0}^*)=010}&=\pr\br{\Po(dp^*)=k-1}=p^*q=p_{010},\\
\pr\br{(\mu_{v_0}^*,\mu_{v_0\toboss}^*,\mu_{\fromboss v_0}^*)=110}&=\pr\br{\Po(dp^*)\geq k}=p^*(1-q)=p_{110}.
\end{align*}

To proceed from $s$ to $s+1$, we condition on $\hat\cF_{s}$ and the aim is to derive the distribution of $\hat\T^*(d,k)$ given $\hat\cF_{s+1}$.
By~(\ref{Fact_measurable}) it is sufficient to study the random tree $\T^*(d,k)$ up to level $s+2$ conditioned on $\hat\cF_{s}$.
Thus, let $\cB_s$ be the set of all vertices $v$ of $\T^*(d,k)$ at distance precisely $s$ from the root.
Moreover, for each $v\in\cB_s$ let $\tau_v=(\tau_v(z_1,z_2,z_3))_{z_1,z_2,z_3\in\cbc{0,1}}$ 
be the number of children of $v$ marked $z_1z_2z_3$.
In addition, set
	$$\tau_v(z_2)=\sum_{z_1,z_3\in\cbc{0,1}}\tau_v(z_1,z_2,z_3).$$
Thus $\tau_v(z_2)$ is the number of messages of type $z_2$ that $v$ receives from its children.

By (\ref{Fact_measurable}) conditioned on $\hat\cF_s$ the random variables $(\tau_v)_{v\in\cB_s}$ are mutually independent and
the distribution of each individual $\tau_v$ is governed by the mark $(\mu_v^*,\mu_{v\toboss}^*,\mu_{\fromboss v}^*)$ only.
More precisely, 
we are going to verify that the distribution of $\tau_v$ is given by the generating function
	$g_{\mu_v^*,\mu_{v\toboss}^*,\mu_{\fromboss v}^*}$ from Figure~\ref{Fig_g} by  investigating the possible cases one by one.

We first observe that in all cases, the $\tau_v(0)$ has distribution $\Po(d(1-p^*))$ independently of the number of children of $v$ of all other types.
\begin{description}
\item[Case 1: $(\mu_v^*,\mu_{ v\toboss}^*,\mu_{\fromboss v}^*)=000$] 
	By (\ref{eqKathrin1}) we have $\mu_{\fromboss w}^*=0$ for all children $w$ of $v$.
	Further, since $\mu_{ v\toboss}^*=0$, we know that $\tau_v(1)<k-1$.
	Thus, $\tau_v(1)$ has distribution $\Po_{<k-1}(dp^*)$.
	Further, for a child $w$ of $v$, conditioned on $\mu_{w\toboss}^*=1$, we have $\mu_{w}^*=1$ iff
		$w$ has at least $k$ children $y$ such that $\mu_{y\toboss}^*=1$.
	This event occurs with probability $\pr\brk{\Po(dp^*)\geq k}$ independently for each $w$.
	Hence, conditioned on $\tau_v(1)$ we have $\tau(0,1,0)=\Bin(\tau_v(1),q)$ and $\tau_v(1,1,0)=\tau_v(1)-\tau(0,1,0)$.
	In summary, we obtain the generating function $g_{000}$.
\item[Case 2: $(\mu_v^*,\mu_{ v\toboss}^*,\mu_{\fromboss v}^*)=001$] 
	There are two sub-cases.
	\begin{description}
	\item[Case 2a: $\tau_v(1)=k-2$]
		then for any child $w$ of $v$ we have $\mu_{\fromboss w}^*=1-\mu_{w\toboss}^*$.
		Hence, for each of the $k-2$ children $w$ such that $\mu_{w\toboss}^*=1$ we have $\mu_w^*=1$ 
		iff $w$ has at least $k$ children $y$ such that $\mu_{y\toboss}^*=1$.
		Thus, $\mu_w^*=\Be(1-q)$ independently for each such $w$.
		Moreover, for each child $w$ of $v$ with $\mu_{w\toboss}^*=0$ we have $\mu_w^*=0$.
	\item[Case 2b: $\tau_v(1)<k-2$]
		We have $\mu_{\fromboss w}^*=0$ for all children $w$ of $v$.
		Hence, $\tau_v(0)$ has distribution $\Po(d(1-p^*))$ and for every child $w$ with $\mu_{w\toboss}^*=0$ we have $\mu_w^*=0$.
		Thus, $\tau_v(0)=\tau_v(0,0,0)$.
		Further, $\tau_v(1)$ has distribution $\Po_{<k-2}(dp^*)$.
		Finally, since $\mu_{\fromboss w}^*=0$ for all $w$, any child $w$ such that $\mu_{w\toboss}^*=1$ satisfies $\mu_{w}^*=1$ iff
		$w$ has at least $k$ children $y$ such that $\mu_{y\toboss}^*=1$.
		This event occurs with probability $1-q$ independently for all such $w$.
	\end{description}
	Since the first sub-case occurs with probability $\bar q$ and the second one accordingly with probability $1-\bar q$, we obtain
	the generating function $g_{001}$.
\item[Case 3: $(\mu_v^*,\mu_{v\toboss}^*,\mu_{\fromboss v}^*)=010$] 
	Because $\mu_v^*=\mu_{\toboss v}^*=0$, we have $\tau_v(1)=k-1$ with certainty.
	Further, because $\mu_{\fromboss v}^*=0$ and $\tau_v(1)=k-1$, (\ref{eqKathrin1}) entails that $\mu_{\fromboss w}^*=1-\mu_{w\toboss}$
		for all children $w$ of $v$.
	Hence, if $w$ is a child such that $\mu_{w\toboss}^*=1$, then $\mu_w^*=1$ iff $w$ has at least $k$ children $y$ such that $\mu_{y\toboss}^*=1$.
	This event occurs with probability $1-q$ for each $w$ independently.
	Consequently, $\tau(0)=\tau(0,0,1)$ and $\tau_v(1)=\tau_v(1,1,0)+\tau_v(0,1,0)$ and $\tau_v(1,1,0)=\Bin(\tau_v(1),1-q)$.
	Thus, the offspring distribution of $v$ is given by $g_{010}$.
\item[Case 4: $(\mu_v^*,\mu_{ v\toboss}^*,\mu_{\fromboss v}^*)=110$] 
	Since $\mu_v^*=1$, (\ref{eqKathrin1}) entails that $\mu_{\fromboss w}^*=1$ for all children $w$ of $v$.
	Hence $\tau_v(0)=\tau_v(0,0,1)$.
	Moreover, 	since $\mu_{\fromboss v}^*=0$ and $\mu_v^*=1$, (\ref{eqKathrin2}) implies that $\tau_v(1)=\tau_v(1,1,1)\geq k$.
	Consequently, $\tau_v(1)=\Po_{\geq k}(dp^*)$ independently of $\tau_0(v)$.
	Thus, we obtain $g_{110}$.
\item[Case 5: $(\mu_v^*,\mu_{ v\toboss}^*,\mu_{\fromboss v}^*)=111$] 
	As in the previous case, $\mu_v^*=1$, (\ref{eqKathrin1}) ensures that $\mu_{\fromboss w}^*=1$ for all children $w$ of $v$.
	Thus, $\tau_v(0)=\tau_v(0,0,1)$.
	Furthermore, as $\mu_v^*=\mu_{\fromboss v}^*=1$, $\tau_v(1)=\tau_v(1,1,1)$ has distribution $\Po_{\geq k-1}(dp^*)$.
	In summary, the distribution of the offspring of $v$ is given by $g_{111}$.
\end{description}
Thus, in each case we obtain the desired offspring distribution.
\end{proof}

\begin{proof}[Proof of \Thm~\ref{Thm_lwc}]
By \Prop~\ref{Prop_main} we have
	$$\lim_{n\to\infty}\Lambda_{d,k,n}=\atom_{\theta_{d,k}}\quad \text{and}\quad \theta_{d,k}=\lim_{t\to\infty}\cL([\T_t(d,k)]).$$

Moreover, combining \Lem s~\ref{Lemma_sizeless}, \ref{Lemma_topdown}, \ref{Lemma_decorated} and~\ref{Lem_id}, we see that
$\theta_{d,k}=\thet_{d,k,p^*}$.
\end{proof}

\begin{proof}[Proof of \Thm~\ref{Thm_main}]
We deduce \Thm~\ref{Thm_main} from \Thm~\ref{Thm_lwc}.
Let  $s\geq0$ and let $\tau$ be a $\cbc{0,1}$-marked rooted tree.
The function
	$f:\cG_{\cbc{0,1}}\to\RR,\ \gamma\mapsto\chi_{\tau,s}(\gamma)$
is continuous, where $\chi_{\Gamma,s}$ is as defined in~\eqref{eqtop}, and we let
	$$z=\Erw_{\T(d,k,p^*)}[f([\T(d,k,p^*)])]=\pr\brk{\partial^s[\T(d,k,p^*)]=\partial^s[\tau]}.$$
The function
	$$F:\cP^2(\cG_{\cbc{0,1}})\to\RR,\qquad\xi\mapsto\int\abs{\int f\dd\nu-z}\dd\xi(\nu),$$
where, of course, $\nu$ ranges over $\cP(\cG_{\cbc{0,1}})$, is continuous as well.
Consequently, \Thm~\ref{Thm_lwc} implies that
	\begin{align}\label{eqConvProb1}
	\lim_{n\to\infty}\int F\dd\Lambda_{d,k,n}&=
		\abs{\int f\dd \thet_{d,k,p^*}-z}=|\Erw_{\T(d,k,p^*)}[f([\T(d,k,p^*)])]-z|=0.
	\end{align}
Let
	$X_\tau(\G)=n^{-1}\abs{\cbc{v\in[n]:\partial^s[\G_v,v,\sigma_{k,\G_v}]=\partial^s[\tau]}}.$
Plugging in the definition of $\Lambda_{d,k,n}$, we obtain
	\begin{align}\label{eqConvProb2}
	\int F\dd\Lambda_{d,k,n}&=
		\Erw_{\G}\abs{X_\tau(\G)-z}.
	\end{align}
Finally, combining (\ref{eqConvProb1}) and~(\ref{eqConvProb2}) completes the proof.
\end{proof}

\paragraph{\bf Acknowledgment}
The first author thanks Victor Bapst and Guilhem Semerjian for helpful discussions.

\section{Appendix: Tables of Definitions}\label{appendix}

We provide reference tables of various definitions which we have made throughout the paper. We sometimes give only informal descriptions here -- the precise definitions appear in the main body of the paper.

\subsection{Random Trees}
$\T(d)$ is the standard (unmarked single-type) Galton-Watson tree in which each vertex has $\Po(d)$ children independently. From $\T(d)$ we may construct labels \emph{bottom-up} using a variant of Warning Propagation.

\vspace{0.5cm}

\noindent \textbf{Bottom-up Trees:}
\\
\\
\begin{tabular}{lll}
Tree name & Types & Further description\\
$\T_t(d,k)$ & $\{0,1\}$ & Obtained from $\T(d)$ after $t$ rounds of Warning Propagation.\\
$\hat{\T}_t(d,k)$ & \labelset & Obtained from $\T(d)$ after $t$ rounds of 5-type Warning Propagation.\\
\end{tabular}

\vspace{0.5cm}

Alternatively, we may construct labels \emph{top-down}, so the labels are constructed simultaneously with the tree.

\vspace{0.5cm}

\noindent \textbf{Top-down Trees}\\
\\
\begin{tabular}{lll}
Tree name & Types & Further description\\
$\hat{\T}(d,k,p)$ & $\labelset$ & Constructed according to the generating functions of Figure~\ref{Fig_g}.\\
$\T(d,k,p)$ & $\{0,1\}$ & $2$-type projection of $\hat{\T}(d,k,p)$.\\
\end{tabular}

\vspace{0.5cm}

Note that there are certain consistent notational conventions: $\hat{\T}$ indicates a $5$-type tree, while $\T_t$ indicates a tree whose labels were created bottom-up using some variant of Warning Propagation. Finally, we have two more trees which allow us in a sense to transition between the top-down and the bottom-up trees.

\vspace{0.5cm}

\noindent \textbf{Transition Trees}\\
\\
\begin{tabular}{lll}
Tree name & Types & Further description\\
$\T^*(d,k)$ & $\{0,1\}$ & Labels created top-down, mimic the upwards messages $\mu_{v\toboss}$.\\
$\hat{\T}^*(d,k)$ & \labelset & Obtained from $\T^*(d,k)$ according to the rules~\eqref{eqKathrin1} and~\eqref{eqKathrin2}.
\end{tabular}

\vspace{0.5cm}

Lemma~\ref{Lem_id} says that $\hat{\T}^*(d,k)$ has the same distribution as $\hat{\T}(d,k,p^*)$.

\subsection{Distributions}
We define various probability distributions and their corresponding laws in the paper which we list here, including some equivalences which are not part of the definitions, but which we prove during the course of the paper.

\noindent \textbf{Distributions in $\cP(\cG)$, resp. $\cP(\cG_{\{0,1\}})$:}\\
\\
\begin{tabular}{lll}
Distribution & Definition & Description\\
$\lambda_{G}$ & $\frac1{|V(G)|}\sum_{v\in V(G)}\delta_{[G_v,v]}$ & distribution of neighbourhoods of vertices in $G$.\\
$\lambda_{k,G}$ & $\frac1{|V(G)|}\sum_{v\in V(G)}\delta_{[G_v,v,\sigma_{k,G_v}]}$ & vertices labelled according to membership of the core.\\
$\lambda_{k,G,t}$ & $\frac1{|V(G)|}\sum_{v\in V(G)}\delta_{[G_v,v,\mu_{\nix}(t|G_v)]}$ & vertices labelled after $t$ rounds of Warning Propagation.\\
\end{tabular}

\vspace{0.5cm}

\noindent \textbf{Distributions in $\cP^2(\cG)$, resp. $\cP^2(\cG_{\{0,1\}})$:} \\
\\
\begin{tabular}{lll}
Distribution & Definition & Description\\
$\Ldn$ & $\Erw_{\G}[\delta_{\lambda_{\G}}]$ & distribution of neighbourhoods of the random graph \eqref{eqLambdadn}.\\
$\Lambda_{d,k,n}$ & $\Erw_{\G}[\delta_{\lambda_{k,\G}}]$ & vertices labelled according to membership of the core \eqref{eqLambdadkn}.\\
$\Lambda_{d,k,n,t}$ & $\Erw_{\G}[\lambda_{k,\G,t}]$ & vertices labelled after $t$ rounds of Warning Propagation \eqref{eqLambdadknt}.\\
$\Lambda_{d,k}$ & $\lim_{n\to \infty} \Lambda_{d,k,n} $ & limiting labelled neighbourhood distribution of the random graph (Prop.~\ref{Prop_main}).\\
\end{tabular}

\vspace{0.5cm}

\noindent \textbf{Distribution laws:}\\
\\
\begin{tabular}{lll}
Distribution Law & Definition & Remarks\\
$\thet_{d,k,p}$ & $\cL[\T(d,k,p)]$ & \\
$\theta_{d,k}$ & $\lim_{t\to \infty}\cL[\T_t(d,k)]$ & = $\thet_{d,k,p^*}$ \; (Prop.~\ref{Prop_main}, Thm.~\ref{Thm_lwc}).\\
$\theta_{d,k,t}^{s}$ & $\cL(\partial^s[\T(d),v_0,\mu_{\nix\toboss}(t|\T(d))])$ & \\
$\theta_{d,k}^{s,*}$ & $\cL(\partial^s[\T(d),v_0,\mu_{\nix\toboss}^*(s|\T(d),s)])$ & $ = \lim_{t\to \infty}\theta_{d,k,t}^{s} = \cL(\partial^s [\T^*(d,k)])$ \; (Lemmas~\ref{Lemma_sizeless} and~\ref{Lemma_topdown}).\\
\end{tabular}

\vfill

\end{document}